\documentclass[11pt]{amsart}
\usepackage{amscd}
\usepackage[arrow,matrix]{xy}
\input xypic
\usepackage{graphicx, tikz}
\usepackage{hyperref}
\usepackage{soul}
\usepackage{comment}
\usepackage{amsmath}
\usepackage{amsmath, latexsym, amssymb}
\numberwithin{equation}{section}
\theoremstyle{plain}

\newtheorem{lemma}{Lemma}[section]
\newtheorem{proposition}[lemma]{Proposition}
\newtheorem{theorem}[lemma]{Theorem}
\newtheorem{corollary}[lemma]{Corollary}
\newtheorem{problem}[lemma]{Problem}

\theoremstyle{definition}
\newtheorem{definition}[lemma]{Definition}
\newtheorem*{definition*}{Definition}
\newtheorem{remark}[lemma]{Remark}
\newtheorem{example}[lemma]{Example}

\usepackage{color}
\usepackage{cancel}

\definecolor{brown}{RGB}{150,100,0}

\definecolor{purple}{RGB}{150,0,100}
\definecolor{grey}{RGB}{128,128,128}

\newcommand{\R}{{\mathbb R}}

\newcommand{\N}{{\mathbb N}}


\newcommand{\Ff}{{\mathcal F}}
\newcommand{\Gg}{{\mathcal G}} 
\newcommand{\Hh}{{\mathcal H}}

\newcommand{\Mm}{{\mathcal M}} 
\newcommand{\Nn}{{\mathcal N}}

\newcommand{\Pp}{{\mathcal P}}

\newcommand{\Ss}{{\mathcal S}}

\newcommand{\Xx}{{\mathcal X}}
\newcommand{\Yy}{{\mathcal Y}}
\newcommand{\Zz}{{\mathcal Z}}

\mathchardef\mhyp="2D

\newcommand{\eps}{{\varepsilon}}

\newcommand{\Meas}{{\bf Meas}}
\newcommand{\Probm}{{\bf Probm}}


\def\NABLA#1{{\mathop{\nabla\kern-.5ex\lower1ex\hbox{$#1$}}}}
\def\Nabla#1{\nabla\kern-.5ex{}_#1}



\mathchardef\mhyp="2D

\DeclareMathOperator{\Id}{Id}

\newcommand{\pb}{{\mathbf p}}
\newcommand{\qb}{{\mathbf q}}

\begin{document}
\title[Probabilistic morphisms]{Probabilistic morphisms and Bayesian supervised learning}
\author[H. V. L\^e]{H\^ong V\^an L\^e}	
\address{Institute of Mathematics of the Czech Academy of Sciences, Zitna 25, 11567 Praha 1, Czech Republic $\&$  Charles University, Faculty of Mathematics and Physics
	Ke Karlovu 3, 121 16 Praha 2, Czech  Republic}

\email{hvle@math.cas.cz}

	\date{\today}
\keywords{Markov kernel, Bayesian inversion,  category of probabilistic morphisms,    Bayesian supervised learning model, Gaussian process regression}
\subjclass[2020]{Primary: 62C10, Secondary: 62G05, 62G08, 18B40}

\begin{abstract}    In this  paper,     we  develop category  theory of Markov kernels to the  study  of categorical  aspects  of Bayesian inversions.    As a  result,  we present a unified model for Bayesian  supervised learning, encompassing Bayesian density estimation. We   illustrate this model with  Gaussian process regressions. 

\end{abstract}

\maketitle


\section{Preface}
I dedicate this paper to my teacher, Professor Anatoly Fomenko, on  his 80th birthday. This work is inspired by Professor Fomenko's contributions to various fields of mathematics and his passion for exploring the connections between mathematics and natural sciences. I wish him continued good health and great success in the years to come.

\section{Introduction}\label{sec:intro}

     For a measurable space $\Xx$,  we  denote  by $\Sigma_\Xx$  the $\sigma$-algebra of $\Xx$,  and by $\Ss(\Xx), \Mm (\Xx), \Pp (\Xx)$ the  spaces  of  all signed finite  measures,   (nonnegative) finite  measures,  probability measures, respectively,   on $\Xx$.  If $\Xx$ is a  topological space, we  consider the Borel $\sigma$-algebra on $\Xx$, unless otherwise  stated.
     In this paper  we are interested in solving the following  problem.
 
\begin{problem}[{\bf Supervised  Inference  Problem SBI}]\label{prob:transuper} Let $\Xx$  be  an input  space and  $\Yy$    a measurable label  space.  Given  training  data
	$S_n: =\big ((x_1, y_1), \ldots ,\\ (x_n, y_n)\big) \in  (\Xx \times \Yy)^n$  and   new  test data $T_m :  = (t_1, \ldots, t_m)\in \Xx^m$,  estimate  the  {\it predictive   probability  measure}  in $ \Pp (\Yy^m)$  that  governs  the joint distribution of  the   $m$-tuple  $ \big(y_1, \ldots, y_m\big) \in \Yy^m$  where $y_m$  is the label of $t_m$.   
\end{problem} 

If $\Xx$  consists  of a single point,   Problem SBI  is equivalent  to   the   fundamental problem of  density/probability estimation in classical mathematical statistics.

 A  non-Bayesian  (frequentist) approach to Problem SBI    begins  with   the following    assumption:  the labeled    items   $(x_i, y_i)\in  (\Xx\times  \Yy), \, i \in [1,n],$ are identically independently  distributed  (i.i.d.) by an  (unknown) probability measure  $\mu \in \Pp (\Xx\times \Yy)$. In particular, $\Xx$ must be a measurable space. Under this assumption   and other mild conditions, e.g.,  $\Yy$ is a standard Borel space, or  $\Xx$ is a countable sample space, or $\mu$ is dominated by  a product measure $\nu_0\otimes \rho_0$ where $\nu_0$ and $\rho_0$ are $\sigma$-finite measures on $\Xx$ and $\Yy$, respectively,   the  joint distribution  of the    $m$-tuple  $(y_1, \ldots, y_m) \in \Yy^m$ equals  $\otimes _{i =1}^m \mu (\cdot| t_i) \in \Pp (\Yy^m)$  where $\mu (\cdot|t_i)\in \Pp (\Yy)$ is  the    probability  measure  of the label   of $t_i\in \Xx$,  which  is a  regular  version of  conditional  probability  measure  $\mu$  with respect to the projection $\Pi_\Xx: \Xx \times \Yy \to \Xx$, conditioning    at $t_i$.     Within the  non-Bayesian  framework,  to solve  Problem SBI  one  has  to  approximate the  {\it supervisor  operator}  $\mu_{\Yy|\Xx}: \Xx \to \Pp (\Yy)$, which represents  the conditional  probability  measure  $\mu$  with respect to the projection $\Pi_\Xx: \Xx \times \Yy \to \Xx$,    by an optimal  supervisor $h_{opt}$ in   a given hypothesis   class  $\Hh$ of  possible  supervisors, and then evaluate  the optimal supervisor  $h_{opt}\in \Hh$ at  a  test point $t_i$.  Note that $\Hh$ is often chosen to be a    subset  of the     set $\Meas (\Xx, \Yy)$ of all  measurable mappings   from $\Xx$ to $\Yy$, identifying  $f(x)\in \Yy$ with   the Dirac measure  $\delta_{f (x)} \in \Pp (\Yy)$ \cite{Vapnik1998}.

The i.i.d. assumption  is not always      satisfied in life. In Bayesian  statistics, one   replaces  the i.i.d.  assumption by   the weaker  condition of conditionally  i.i.d. assumption \cite{Schervish1997}. In this paper, using the category of probabilistic morphisms, we propose a  Bayesian  approach to  solve Problem SBI, which encompasses 
 Bayesian  models for  regression  learning and density estimation.

The plan of our paper is   as follows. In Section \ref{sec:unified}, we  present  basic  concepts  of  the category of Markov kernels,  also called probabilistic morphisms, which  has been introduced  independently  by Lawvere \cite{Lawvere1962} and  Chentsov \cite{Chentsov1965}.
   In particular,  we study a contravariant functor  from the  category of probabilistic morphisms to the category of Banach spaces  (Theorem \ref{thm:clbanach}) to show that a composition  of Bayesian inversions  is a  Bayesian inversion (Proposition \ref{prop:bayesgroup}).  Then, we introduce the concept  of the graph of a probabilistic morphism (Definition \ref{def:graph}), and  using  this concept  we give a characterization of a Bayesian inversion  (Theorem \ref{thm:marginal}, Lemma \ref{lem:bsuff}).   In Section \ref{sec:bsl},  we propose  our solution  to SBI Problem  by presenting a unified model of 
   Bayesian supervised learning (Definition \ref{def:fposteriord}, Proposition \ref{prop:univ}) and  illustrate  this models   in   Bayesian regression learning (Corollary \ref{cor:bnrp}, Definition \ref{def:bnr}, Example  \ref{ex:gauss}). In the  final section \ref{sec:final} we discuss our results  and some  related open problems.

\section{The category of probabilistic morphisms and Bayesian inversions}\label{sec:unified}
In this section,  we  first recall     Chentsov's  and Lawvere's  approaches    to    the  category of  Markov kernels (Propositions \ref{prop:chentsovbanach}, Remark \ref{rem:functors}, Theorem \ref{thm:clbanach}).   Then,  we  provide a characterization  of  regular  conditional  measures using  a concept  of  the graph  of a probabilistic  morphism (Definition \ref{def:graph}, Theorem \ref{thm:marginal}). Finally, we recall  the concept  of a Bayesian inversions  and show that the equivalence classes  of Bayesian  inversions   form a   groupoid (Proposition \ref{prop:bayesgroup}).

 \subsection{Notation and conventions}\label{subs:notation} 
 \
 
 $\bullet$ For  any    measurable  space $\Xx$,  we  denote by $\Sigma _w   $  the smallest $\sigma$-algebra  on $\Pp (\Xx)$ such that  for  any $A \in \Sigma _\Xx$  the function $I_{ 1_A}: \Pp (\Xx) \to \R, \mu \mapsto   \int 1_A  d\mu $ is  measurable.  Here $1_A$ is the  characteristic  function  of $A$.  In our paper  we always  consider  $\Pp (\Xx)$  as  a measurable   space  with    the   $\sigma$-algebra  $\Sigma_w$, unless  otherwise stated.

 $\bullet$  A Markov  kernel $T: \Xx \times  \Sigma_\Yy \to [0,1]$    is  uniquely defined   by    the map  $ \overline T:  \Xx \to  \Pp (\Yy)$  such that  $\overline T (x) (A) = T (x, A)$ for all $x\in \Xx, A \in \Sigma _\Yy$.   We  shall   also use notations $T ( A|x):= T (x, A) $ and $\overline T(A|x) : = \overline T (x)(A)$.  
 
 $\bullet$ A    {\it probabilistic  morphism}  $T: \Xx \leadsto \Yy$  is an  arrow assigned to a  measurable mapping, denoted  by $\overline T$,  from $ \Xx$ to  $\Pp (\Yy)$.  We say  that  $T$ is generated  by $\overline  T$.  For  a measurable mapping $T: \Xx \to \Pp (\Yy)$ we  denote  by $\underline T: \Xx \leadsto  \Yy$ the generated  probabilistic morphism.
 
 $\bullet$  For   probabilistic  morphisms  $T_{\Yy|\Xx} : \Xx \leadsto \Yy$ and  $T_{\Zz|\Yy}: \Yy \leadsto  \Zz$  their    composition is the  probabilistic  morphism
 $$  T_{\Zz|\Xx} : = T_{\Zz |\Yy} \circ  T _{\Yy|\Xx}: \Xx \leadsto \Zz  $$
 $$	(T_{\Zz|\Yy}\circ T_{\Yy|\Xx}) (x, C): = \int_\Yy T_{\Zz|\Yy} (y ,C) T_{\Yy| \Xx} (dy|x) $$
 for	$x\in \Xx$   and $C \in \Sigma_\Zz$. 
 It is well-known that the composition is associative.

 $\bullet$ We  denote  by $\Meas(\Xx, \Yy)$  the set  of  all measurable mappings from   a measurable  space  $\Xx$ to a measurable space $\Yy$, and by $\Probm (\Xx, \Yy)$  the set  of all  probabilistic  morphisms    from $\Xx$ to $\Yy$.
 We denote  by $\Yy ^\Xx$  the set  of all mappings  from $\Xx$ to $\Yy$.
 For  any $\Xx$ we denote by $\Id_\Xx$ the  identity map on $\Xx$. For a   product   space  $\Xx \times \Yy$ we denote  by $\Pi_\Xx$ the canonical projection   to the    factor  $\Xx$.
 
 $\bullet$ We denote by $\mathbf {Meas}$ the category of measurable  spaces, whose objects  are measurable  spaces   and  morphisms  are measurable mappings.

 
 Important  examples  of  probabilistic morphisms  are   measurable  mappings  $\kappa: \Xx \to \Yy$, since    the  Dirac map $\delta:\Xx \to \Pp (\Xx), x\mapsto \delta_x,$ is measurable \cite[Theorem 1]{Giry1982},  and hence  we  regard $\kappa$ as a  probabilistic morphism   which is generated  by the measurable mapping $\overline \kappa: =\delta \circ \kappa: \Xx \to \Pp (\Yy)$.  We  shall denote    measurable mappings by straight  arrows  and  probabilistic morphisms  by curved  arrows.  Hence  $\Meas (\Xx, \Yy) $  can be regarded as a  subset of $\Probm(\Xx, \Yy)$.  Other important  examples  of  probabilistic morphisms  are regular  conditional   probability measures,  whose   definition we    recall now.
 
   Let $\mu$  be   a finite measure on $(\Xx \times \Yy, \Sigma_\Xx \otimes \Sigma_\Yy)$.  A   {\it product regular  conditional  probability measure} \index{product  regular conditional measure} for $\mu$ with respect  to the  projection $\Pi_\Xx: \Xx \times \Yy \to \Xx$ is a  Markov  kernel\index{Markov kernel} $\mu_{\Yy|\Xx}: \Xx \times \Sigma_\Yy \to [0,1]$ such that
 \begin{equation}\label{eq:regpr}
 	\mu (A \times B) = \int_A  \mu_{\Yy|\Xx} (x, B)\, d (\Pi_\Xx)_* \mu  (x)
 \end{equation}
 for  any $A \in \Sigma_\Xx$ and $B \in \Sigma_\Yy$ \cite{LFR2004}. We shall    simply call $\mu_{\Yy|\Xx}$ a {\it regular  conditional  probability measure} for $\mu$,  and we identify $\mu_{\Yy|\Xx}$ with the  generating measurable  map $\overline{\mu_{\Yy|\Xx}}: \Xx \to \Pp(\Yy), x\mapsto \mu_{\Yy|\Xx} (\cdot |x)$.

 \subsection{Properties of Lawvere's  $\sigma$-algebra $\Sigma_w$}\label{subs:law} The $\sigma$-algebra $\Sigma_w$ was proposed by Lawvere in 1962  to   regard Markov kernels  as  measurable mappings.
 In  this subsection we  collect several  properties of    Lawvere's $\sigma$-algebra $\Sigma_w$ on $\Pp (\Xx)$  that shall be utilized in later parts  of this paper.
 \begin{proposition}\label{prop:lawvera}
 	(1) For any $k \in \N^+$ the multiplication mapping
 	\begin{equation}\label{eq:frakmk}
 		\mathfrak m^k:\prod_{i =1}^k\big (\Pp(\Xx_1) , \Sigma_w\big ) \to   \Big (\Pp \big (\prod_{i=1}^k \Xx_i\big ), \Sigma_w \Big ), \,  (\mu_1, \ldots, \mu_k) \mapsto \otimes_{i=1}^k \mu_i
 	\end{equation}
 	is  measurable.
 	
 	
 	(2)  Let $V$  be a  topological  vector space  and $\mu_0 \in \Pp (V)$.   
 	
 	(2a)   
 	 The  map  
 	$$I_{\mu_0}: \Pp (V) \to \Pp (V \times V),  \mu \mapsto \mu \otimes \mu_0$$
 	is measurable.
 	
 	(2b)   We define     the {\it convolution  operator} $* : \Pp (V)\times \Pp (V) \to \Pp  (V)$ by   the following  formula 
 	$$ \mu * \nu : =   (\mathrm{Ad})_*   (\mu \otimes \nu)$$
 	where $ \mathrm{Ad}: V\times V \to V,  (x, y) \mapsto x +y$, cf. \cite[Definition  3.9.8, p. 207, vol. 1]{Bogachev2007}.
 	Then the map 
 	$$ C_{\mu_0}: \Pp (V) \to \Pp (V),  \mu \mapsto  \mu\otimes \mu_0$$
 	is measurable.
 \end{proposition}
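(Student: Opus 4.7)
The plan is to prove the three parts sequentially, with part (1) supplying the main input for (2a) and (2b).

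For part (1) I would apply Dynkin's $\pi$-$\lambda$ theorem. By the defining property of $\Sigma_w$ on the codomain $\Pp(\prod_{i=1}^k \Xx_i)$, the measurability of $\mathfrak{m}^k$ reduces to showing that for every $A$ in the product $\sigma$-algebra $\Sigma := \otimes_{i=1}^k \Sigma_{\Xx_i}$ the real-valued function
$$F_A:(\mu_1,\ldots,\mu_k)\mapsto \big(\otimes_{i=1}^k \mu_i\big)(A)$$
is measurable on the product space $\prod_{i=1}^k(\Pp(\Xx_i),\Sigma_w)$. Let $\mathcal{D}$ denote the class of those $A$ for which $F_A$ is measurable. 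On a measurable rectangle $A=A_1\times\cdots\times A_k$ one has $F_A(\mu_1,\ldots,\mu_k) = \prod_{i=1}^k \mu_i(A_i)$, a finite product of coordinate-wise pullbacks of the generating functions $I_{1_{A_i}}$, and hence is measurable; so $\mathcal{D}$ contains the $\pi$-system of measurable rectangles. The verification that $\mathcal{D}$ is a $\lambda$-system is routine: closure under complements uses $F_{A^c} = 1 - F_A$ (each $\mu_i$ is a probability measure), and closure under countable disjoint unions follows from countable additivity of product measures together with the fact that a pointwise limit of measurable functions is measurable. Dynkin's theorem then yields $\mathcal{D} = \Sigma$.

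For part (2a), the map $\mu\mapsto(\mu,\mu_0)$ from $\Pp(V)$ to $\Pp(V)\times\Pp(V)$ is measurable (the first coordinate is the identity, the second is constant), so $I_{\mu_0}$ is its composition with $\mathfrak{m}^2$ from part (1).

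For part (2b) the stated codomain $\Pp(V)$ is inconsistent with the formula $\mu\otimes\mu_0$, so I read $C_{\mu_0}$ as the convolution $\mu\mapsto\mu*\mu_0 = \mathrm{Ad}_*(\mu\otimes\mu_0)$. Then $C_{\mu_0} = \mathrm{Ad}_*\circ I_{\mu_0}$, and the problem reduces to verifying measurability of the pushforward map $\mathrm{Ad}_*:\Pp(V\times V)\to\Pp(V)$. For every $B\in\Sigma_V$,
$$(\mathrm{Ad}_*\mu)(B) = \mu(\mathrm{Ad}^{-1}(B)),$$
which is measurable in $\mu$ by the very definition of $\Sigma_w$ (using that $\mathrm{Ad}$ is continuous and hence Borel measurable). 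Combining with (2a) yields (2b). The only step requiring real care is the $\pi$-$\lambda$ argument in (1); it is standard but must be carried out on the product $\sigma$-algebra generated by measurable rectangles, not on a possibly finer Borel $\sigma$-algebra of a product topology. The remaining parts are formal consequences.
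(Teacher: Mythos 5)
Your proof is correct and follows essentially the same route as the paper: reduce measurability into $\big(\Pp(\prod_i\Xx_i),\Sigma_w\big)$ to measurability of $(\mu_1,\ldots,\mu_k)\mapsto(\otimes_i\mu_i)(A)$, verify this on measurable rectangles using the generating functions $I_{1_{A_i}}$, and obtain (2a) and (2b) by composing with $\mathfrak m^2$ and with the pushforward $(\mathrm{Ad})_*$. The only differences are to your credit: you make explicit, via the $\pi$--$\lambda$ theorem, the passage from rectangles to the full product $\sigma$-algebra (a step the paper compresses into ``it suffices to show''), you treat general $k$ directly rather than by induction, and you correctly read the typo $\mu\otimes\mu_0$ in the definition of $C_{\mu_0}$ as the intended convolution $\mu*\mu_0$, proving measurability of $(\mathrm{Ad})_*$ from the definition of $\Sigma_w$ where the paper cites Giry.
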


\begin{proof}   (1)	To prove the first assertion of  Proposition \ref{prop:lawvera}, using  induction argument,  it suffices to consider the case  $k=2$.  To  prove that the multiplication map  $\mathfrak m^2$ is measurable,  it suffices  to show that  for any $A , B \in \Sigma _\Xx$  the map $I_{1_{A\otimes B}}: \big(\Pp (\Xx)\times \Pp ( \Yy) , \Sigma_w \otimes\Sigma_w\big ) \to \R,\, (\mu, \nu) \mapsto \mu (A)\cdot\nu (B)$
	is measurable.  The  map  $I_{1_{A \otimes B}}$ is measurable  since  it can be written as  the composition of measurable  mappings
	\begin{equation}\label{eq:mult} 
		\big(\Ss (\Xx)\times \Ss ( \Yy) , \Sigma_w \otimes\Sigma_w\big ) \stackrel{(I_{1_A}\times I_{1_B})}{\longrightarrow}\R \times \R \stackrel{\mathfrak m_\R}{\longrightarrow \R}
	\end{equation}
	where   $\mathfrak m_\R  (x, y)  = x\cdot  y$.
	
	
	(2a)  The embedding $	I_{\mu_0}$
is a measurable  mapping since  $I_{\mu_0}$  is  the composition of the  following measurable mapping
	$$ \Pp (\Xx) \to  \Pp (\Xx)\times \Pp (\Yy), \mu \mapsto  (\mu, \mu_0)$$
	with the   multiplication map $\mathfrak m: \Pp (\Xx)\times \Pp (\Yy) \to \Pp (\Xx\times \Yy)$, which is measurable by   the  first assertion of Proposition \ref{prop:lawvera}.
	
(2b)	 The map  $C_{\mu_0}$ is measurable  since   $C_{\mu_0} =  (\mathrm{Ad})_* \circ I_{\mu_0}$, and     the measurability  of $(\mathrm{Ad})_*$ is   a consequence  of a  result  due to Giry \cite{Giry1982}, see  Remark \ref{rem:functors}(2) in the next  subsection,  taking into  account that $\mathrm{Ad}: V \times V \to V$ is a continuous, and hence, measurable map.
\end{proof}

\begin{remark}\label{rem:m}  For the general  abstract story behind  the formation of the map $\frak m ^2$, see \cite{Kock2011}, and see \cite{FPR2021}
	for a similar result.
\end{remark}
 \subsection{Graphs of probabilistic morphisms and characterizations  of regular  conditional  probability  measures} \label{subs:probm}
  
 \subsubsection{The category of  probabilistic morphisms}\label{ssub:probm}
 We denote by $\mathbf{Probm}$  the category  whose objects  are measurable  spaces   and  morphisms  are probabilistic morphisms  $ T:  \Xx\leadsto\Yy$.  The identity morphism  is $\Id _\Xx$.   For $\kappa \in \Meas(\Xx, \Yy)$ we  also use the shorthand  notation
 \begin{equation}\label{eq:delta}
 	\overline{\kappa}: = \delta \circ  \kappa.
 \end{equation}

  For any $T \in \Probm (\Xx, \Yy)$, $\mu \in \Ss (\Xx)$  and $B \in \Sigma_\Yy$ we set
 \begin{equation}\label{eq:Mhomomorphism}
 	S(T)_*\mu  (B) = \int _\Xx  \overline T(B|x) \, d\mu (x).
 \end{equation}
Chentsov  proved that the category 
$\Probm$ admits  a faithful  functor $S$ to the category  $\mathbf {Ban}$  of Banach  spaces   whose  morphisms  are bounded  linear mappings  of operator norm less than or equal one.
 
 \begin{proposition}\label{prop:chentsovbanach} \cite[Lemmas 5.9, 5.10]{Chentsov72}  Let  $S$  assign  to each  measurable space  $\Xx$ the   Banach  space $\Ss(\Xx)$  of  finite  signed measures    on $\Xx$ endowed  with the  total variation norm  $\| \cdot \|_{TV}$ and  to every  Markov kernel  $T_{\Yy|\Xx}$ the Markov homomorphism  $(T_{\Yy|\Xx})_*$.
 	Then  $S$ is a faithful  functor from  the category $\mathbf {Probm}$ to the category $\mathbf {Ban}$.	
 \end{proposition}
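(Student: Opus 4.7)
The plan is to verify each of the three conditions—functoriality on objects, functoriality on morphisms (with the operator-norm bound), and faithfulness—in turn, then flag Fubini-measurability as the only non-routine point.

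First I would handle the object assignment. For each measurable space $\Xx$ the space $\Ss(\Xx)$ of finite signed measures is a Banach space under the total variation norm by classical measure theory, so $S$ sends objects to the correct category. For the morphism assignment, given a probabilistic morphism $T_{\Yy|\Xx}:\Xx\leadsto\Yy$ with generating map $\overline{T_{\Yy|\Xx}}:\Xx\to\Pp(\Yy)$, I would first check that the formula (\ref{eq:Mhomomorphism}) actually defines an element of $\Ss(\Yy)$: $\sigma$-additivity in $B$ follows from dominated convergence, since for each $x$ the measure $\overline{T_{\Yy|\Xx}}(\cdot|x)$ is a probability measure. Linearity in $\mu$ is immediate from linearity of the integral. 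To obtain the operator-norm bound $\|S(T_{\Yy|\Xx})_*\|\le 1$, I would apply the Jordan decomposition $\mu=\mu^+-\mu^-$; then $S(T_{\Yy|\Xx})_*\mu^{\pm}$ are nonnegative finite measures of total mass $\mu^{\pm}(\Xx)$, so
\[
\|S(T_{\Yy|\Xx})_*\mu\|_{TV}\le S(T_{\Yy|\Xx})_*\mu^+(\Yy)+S(T_{\Yy|\Xx})_*\mu^-(\Yy)=\|\mu\|_{TV}.
\]

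Next I would check functoriality. Preservation of identities is immediate: $S(\Id_\Xx)_*\mu(B)=\int\delta_x(B)\,d\mu(x)=\mu(B)$. For preservation of composition, given $T_{\Yy|\Xx}:\Xx\leadsto\Yy$ and $T_{\Zz|\Yy}:\Yy\leadsto\Zz$, I would expand
\[
S(T_{\Zz|\Yy}\circ T_{\Yy|\Xx})_*\mu(C)=\int_\Xx\!\int_\Yy T_{\Zz|\Yy}(y,C)\,T_{\Yy|\Xx}(dy|x)\,d\mu(x)
\]
and compare with $S(T_{\Zz|\Yy})_*\bigl(S(T_{\Yy|\Xx})_*\mu\bigr)(C)=\int_\Yy T_{\Zz|\Yy}(y,C)\,d\bigl(S(T_{\Yy|\Xx})_*\mu\bigr)(y)$. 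The two expressions agree by Fubini's theorem applied to the signed product structure obtained via Jordan decomposition of $\mu$. This is the one step with any subtlety: one needs joint measurability of $(x,y)\mapsto T_{\Zz|\Yy}(y,C)$ on $\Xx\times\Yy$ (immediate, as it depends only on $y$) and the standard kernel-integration fact that $x\mapsto\int_\Yy f(y)\,T_{\Yy|\Xx}(dy|x)$ is measurable for any bounded measurable $f$. I consider this the main (minor) obstacle; everything else is formal.

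Finally, for faithfulness, I would show that $S(T^1_{\Yy|\Xx})_*=S(T^2_{\Yy|\Xx})_*$ implies $T^1_{\Yy|\Xx}=T^2_{\Yy|\Xx}$. Evaluating both linear operators at the Dirac measure $\delta_x\in\Ss(\Xx)$ yields $\overline{T^1_{\Yy|\Xx}}(x)(B)=\overline{T^2_{\Yy|\Xx}}(x)(B)$ for every $x\in\Xx$ and $B\in\Sigma_\Yy$, whence $\overline{T^1_{\Yy|\Xx}}=\overline{T^2_{\Yy|\Xx}}$ as measurable maps $\Xx\to\Pp(\Yy)$, and so the two probabilistic morphisms coincide. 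This completes the proposal.
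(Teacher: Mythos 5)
Your argument is correct; note that the paper itself gives no proof of this proposition, quoting it directly from Chentsov's Lemmas 5.9 and 5.10, and what you have written is precisely the standard argument behind that citation (Jordan decomposition for the norm bound, the kernel-integration/disintegration identity --- which reappears in the paper as Equation \eqref{eq:tpullback} --- for functoriality of composition, and evaluation at Dirac measures for faithfulness). The only cosmetic quibble is that the interchange of integrals is not literally Fubini for a product measure but the standard monotone-class extension of the defining identity from indicators to bounded measurable functions, which you correctly flag as the one non-formal step.
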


\begin{remark}\label{rem:functors} (1) It is known that the restriction $M_*(T)$  of $S_*(T)$ to  $\Mm(\Xx)$  and the  restriction $P_*(T)$  of $S_*(T)$  to  $\Pp (\Xx)$ maps  $\Mm(\Xx)$ to  $\Mm(\Yy)$ and $\Pp (\Xx)$ to
	$\Pp(\Yy)$, respectively  \cite[Lemma 5.9, p. 72]{Chentsov72}.  We   shall use   the shorthand notation $T_*$  for $S_* (T)$, $M_* (T)$ and $P_* (T)$, if no misunderstanding occurs.  
	
	(2) Let $T \in \Probm(\Xx, \Yy)$. Then  $T_*: \big(\Pp(\Xx),\Sigma_w\big) \to \big(\Pp(\Yy),\Sigma_w\big)$ is a measurable   mapping \cite{Lawvere1962}, \cite[Theorem 1]{Giry1982}. We shall  call the functor $P: \Probm \to \Meas , \Xx \mapsto \big(\Pp (\Xx),\Sigma_w\big), T_{\Yy|\Xx} \mapsto  (T_{\Yy|\Xx})_*$, the {\it Lawvere functor}.
	
	(3) Let $T \in \Probm(\Xx, \Yy)$  and $\nu \ll \mu \in \Ss (\Xx)$. Then $T_*(\nu) \ll T_*(\mu)$ by  a result due  to 
	Ay-Jost-L\^e-Schwachh\"ofer \cite[Remark  5.4, p. 255]{AJLS17}, which generalizes Morse-Sacksteder's  result \cite[Proposition 5.1]{MS1966}, see also \cite[Theorem 2 (2)]{JLT2021}  for an alternative  proof.
	
	(4)  Let  $T_i  \in  \Probm (\Xx_i, \Xx_{i+1})$ for $i  = 1,2$.  Then we have  \cite[Lemma 5.5, p.69]{Chentsov72}
	\begin{equation}\label{eq:tcompose}
		\overline{T_2 \circ T_1} =  P_* (T_2)\circ \overline {T_1}.
	\end{equation}
\end{remark}

Next, we shall  study   a contravariant functor $\Ff_b$  from the category $\Probm$ to the  category $\mathbf{Ban}$.
For a measurable  space $\Xx$,  we denote by $\Ff_b (\Xx)$   the space of  measurable  bounded   functions  on  $\Xx$.  Then  $\Ff_b (\Xx)$ is a Banach space   with the  {\it sup norm} $\| f\|_\infty : = \sup_{x\in \Xx} | f(x)|$. 

The functor  $\Ff_b$ assigns to  each $\Xx \in \Probm$  the Banach space  $\Ff_b (\Xx)$   and   to every Markov kernel  $T_{\Yy|\Xx} $
  the pullback  $T^*_{\Yy|\Xx}$  which is defined  by  the following formula:
\begin{equation}\label{eq:markov*}
	(T_{\Yy|\Xx}^* f) (x): = \int_\Yy   f(y) \, d T_{\Yy|\Xx} (y |x)  \text{ for } f \in \Ff_b (\Yy)  \text{ and  } x \in \Xx.
\end{equation}	

Note  that if  $\kappa \in \Meas(\Xx, \Yy)$,   then   $(\underline{\delta\circ\kappa})^* = \kappa ^*$.  The following theorem  states that $T_{\Yy|\Xx}^* \in  \mathbf{Ban} \big(\Ff_b (\Yy), \Ff_b (\Xx)\big)$  and $\Ff_b$ is a  contravariant functor.

\begin{theorem}\label{thm:clbanach}  
	(1) $T_{\Yy|\Xx}^*$   maps  $\Ff_b (\Yy)$ to $\Ff_b (\Xx)$. It is a  bounded  linear  map between Banach  spaces $(\Ff_b (\Yy), \| \cdot\| _{\infty}), (\Ff_b (\Xx), \| \cdot\| _{\infty})$ of operator  norm 1, which  sends the characteristic  function $1_\Yy$ to  the characteristic  function $1_\Xx$.
	
	(2)    $T^*$ is a positive  operator, i.e.,  $T^*  f (x) \ge 0$   for all  $x \in \Xx$, if  $f(y) \ge 0  $ for all $y \in \Yy$.

	(3) For any $\mu \in \Mm (\Xx)$ and  $g \in \Ff_b (\Yy)$  we have
	\begin{equation}\label{eq:tpullback}
		\int_\Yy g\,d (T_{\Yy|\Xx})_* \mu  := \int_\Xx T_{\Yy|\Xx}^* (g) d\mu.
	\end{equation}

	(4) The   assignment  $\Ff_b: \mathbf{Probm}  \to \Ff_b \mathbf {Meas}, \Ff_b \Xx: = \Ff_b (\Xx), \, F T_{\Yy|\Xx}: = T_{\Yy|\Xx}^*,$  is a  contravariant  functor.
\end{theorem}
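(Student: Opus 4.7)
The plan is to establish (1) by the standard measure-theoretic bootstrap (indicators $\to$ simple $\to$ bounded measurable), deduce (2) directly from positivity of the integral, prove (3) by the same bootstrap, and finally derive (4) using (3) together with the functoriality already recorded in Proposition~\ref{prop:chentsovbanach}.

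For (1), I first note that $T_{\Yy|\Xx}^{*}(1_B)(x) = T_{\Yy|\Xx}(B \mid x) = \overline{T_{\Yy|\Xx}}(x)(B)$, which is measurable in $x$: indeed $\overline{T_{\Yy|\Xx}} \colon \Xx \to (\Pp(\Yy), \Sigma_w)$ is measurable by hypothesis, and the evaluation map $I_{1_B}$ is measurable by the very definition of $\Sigma_w$. Linearity of the integral then gives measurability of $T_{\Yy|\Xx}^{*}(f)$ for every simple function $f$, and for a general $f \in \Ff_b(\Yy)$ writing $f = f_+ - f_-$ and applying the monotone convergence theorem to increasing sequences of simple functions approximating $f_\pm$ completes the argument. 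Boundedness follows from
\begin{equation*}
|T_{\Yy|\Xx}^{*}(f)(x)| \le \|f\|_\infty \int_\Yy dT_{\Yy|\Xx}(\cdot \mid x) = \|f\|_\infty,
\end{equation*}
so $\|T_{\Yy|\Xx}^{*}\|_{\mathrm{op}} \le 1$; linearity in $f$ is immediate from linearity of the integral. Evaluating on $1_\Yy$ gives $T_{\Yy|\Xx}^{*}(1_\Yy)(x) = T_{\Yy|\Xx}(\Yy \mid x) = 1$, hence $T_{\Yy|\Xx}^{*}(1_\Yy) = 1_\Xx$, which also shows that the operator norm equals $1$ exactly. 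Statement (2) is then immediate, since for $f \ge 0$ the value $T_{\Yy|\Xx}^{*}(f)(x)$ is the integral of a nonnegative measurable function against the probability measure $\overline{T_{\Yy|\Xx}}(x)$.

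For (3), when $g = 1_B$ with $B \in \Sigma_\Yy$, both sides of (\ref{eq:tpullback}) equal $\int_\Xx T_{\Yy|\Xx}(B \mid x)\, d\mu(x)$ directly from the definition of $(T_{\Yy|\Xx})_*\mu$ in (\ref{eq:Mhomomorphism}). Linearity extends the identity to simple functions, and the dominated convergence theorem with envelope $\|g\|_\infty$ promotes it to all $g \in \Ff_b(\Yy)$.

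For (4), I must check $(\Id_\Xx)^{*} = \Id_{\Ff_b(\Xx)}$ and $(T_2 \circ T_1)^{*} = T_1^{*} \circ T_2^{*}$ for composable probabilistic morphisms $T_1 \colon \Xx \leadsto \Yy$ and $T_2 \colon \Yy \leadsto \Zz$. The identity law is clear because $\overline{\Id_\Xx}(x) = \delta_x$. For composition, the cleanest route is to apply (3) twice and invoke the functoriality $(T_2 \circ T_1)_* = (T_2)_* \circ (T_1)_*$ from Proposition~\ref{prop:chentsovbanach}: for every $\mu \in \Mm(\Xx)$ and $g \in \Ff_b(\Zz)$,
\begin{equation*}
\int_\Xx T_1^{*}(T_2^{*} g)\, d\mu = \int_\Yy T_2^{*} g\, d(T_1)_*\mu = \int_\Zz g\, d(T_2 \circ T_1)_*\mu = \int_\Xx (T_2 \circ T_1)^{*}(g)\, d\mu.
\end{equation*}
Specializing $\mu = \delta_x$ for arbitrary $x \in \Xx$ yields the pointwise identity $T_1^{*}(T_2^{*} g)(x) = (T_2 \circ T_1)^{*}(g)(x)$, which is exactly the contravariant functoriality. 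The only subtlety I anticipate is bookkeeping in the measurability bootstrap in part~(1); no step presents a genuine obstacle, since once $T_{\Yy|\Xx}^{*}$ is shown to preserve $\Ff_b$ the remaining assertions follow by a routine application of standard integration theory.
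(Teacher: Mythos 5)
Your proof is correct and follows essentially the same route as the paper: the identity \eqref{eq:tpullback} is verified on indicators, extended by linearity to simple functions and then to all of $\Ff_b(\Yy)$ (the paper uses sup-norm density plus boundedness of $T^*_{\Yy|\Xx}$ where you use dominated convergence, an immaterial difference), and functoriality in (4) is obtained exactly as in the paper by applying (3) twice together with Proposition~\ref{prop:chentsovbanach} and specializing to $\mu=\delta_x$. The only divergence is that for parts (1) and (2) the paper simply cites Chentsov, whereas you supply the standard bootstrap argument; your version is complete and correct.
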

\begin{proof}  The first two assertions  of   Theorem \ref{thm:clbanach}  were proved by Chentsov  
	\cite[Corollary of Lemma 5.1 and its Corollary, p. 66]{Chentsov72}.
	
Let us prove the third assertion.	By  Definition of $(T_{\Yy|\Xx})_*\mu$, see   Equation \eqref{eq:Mhomomorphism}, for any $B \in \Sigma_\Yy$ 
	we have
	$$\int _\Yy  1_B\,  d (T_{\Yy|\Xx})_*\mu = \int_\Xx \int_\Yy 1_B\, d  T_{\Yy|\Xx} (\cdot |x)\, d\mu (x).$$
	By Definition of $T_{\Yy|\Xx}^*1_B$, see \eqref{eq:markov*},  we have
	$$\int_\Xx \int_\Yy 1_B\, d  T_{\Yy|\Xx} (\cdot |x)\,  d\mu (x) = \int_\Xx  T_{\Yy|\Xx}^*   (1_B)(x)\, d\mu (x).$$
	Comparing the  above identities, we  obtain     Equation \eqref{eq:tpullback}  for  $g = 1_B$.  Let  $\Ff_s (\Yy)$   denote  the space of simple functions  on $\Yy$.  Since the LHS and RHS  are  linear  in $g$,  the  equality  \eqref{eq:tpullback} is also valid for any $g \in \Ff_s (\Yy)$.  It is known that  $\Ff_s(\Yy)$ is dense  in $\Ff_b (\Yy)$ in the sup-norm,  see, for instance,  \cite[p. 66]{Chentsov1965}, taking into account  that $T^*_{\Yy|\Xx}$  is   a bounded linear map of norm 1,  we conclude    Assertion (3).
	
 To  prove  the last assertion of Theorem \ref{thm:clbanach},   it suffices to show that  for any $T_{\Zz|\Yy} \in \Probm (\Yy, \Zz)$, $T_{\Yy|\Xx} \in  \Probm (\Xx, \Yy)$  and $g \in \Ff_b (\Zz)$ we have
	\begin{equation}\label{eq:functo}
		(T_{\Zz|\Yy}\circ T_{\Yy|\Xx}) g =  T_{\Yy|\Xx}^*\circ T_{\Yy|\Zz}^* g.
	\end{equation} 
By  Assertion  (3),  we have  for any  $\mu \in \Mm(\Xx)$
\begin{align}
	\int _\Xx  (T_{\Zz|\Yy}\circ T_{\Yy|\Xx})^* g  d\mu  &= 
\int _\Zz  g\, d \big ((T_{\Zz|\Yy}\circ  T_{\Yy|\Xx})_*   \mu\big ) \nonumber\\
 \text{(by Proposition \ref{prop:chentsovbanach})}  & = \int _\Zz g\, d \big ((T_{\Zz|\Yy})_*\circ  (T_{\Yy|\Xx})_*   \mu\big )\nonumber\\
 \text{(by \eqref{eq:tpullback}) }  & = \int _\Zz T_{\Yy|\Xx}^* \circ T_{\Yy|\Zz}^*  g\, d\mu. \label{eq:functomu}
\end{align}
Replacing $\mu$  in  Equation  \eqref{eq:functomu} by $\delta_x $ for $x \in \Xx$ we obtain immediately  Equation \eqref{eq:functo}. 
\end{proof}

 \subsubsection{ A characterization of regular conditional probability measures}\label{subs:charreg}
 
 Using  the Lawvere functor $P$,   we shall     characterize      regular conditional probability   measures among probabilistic morphisms, using the concept of   the graph of a probabilistic morphism.
 Given  two  measurable  mappings
 $\overline T_i: \Xx \to \Pp(\Yy_i)$,  where i = 1,2,  let us consider the map $$\overline{T_1 \cdot T_2}: \Xx \to \Pp (\Yy_1 \times \Yy_2),\; x\mapsto   \mathfrak m^2 (\overline   T_1(x), \overline T_2(x)).$$
 The map $\overline{T_1 \cdot T_2}: \Xx \to \Pp (\Yy_1 \times \Yy_2)$  is
 a measurable mapping  since  it is  the  composition of  two measurable  mappings $(\overline T_1\times \overline T_2): \Xx  \to  \Pp (\Yy_1) \times \Pp (\Yy_2)$ and  $\mathfrak m^2: \Pp (\Yy_1) \times \Pp (\Yy_2) \to  \Pp (\Yy_1 \times \Yy_2)$.

 \begin{definition}\label{def:graph}  (1)  Given  two  probabilistic morphisms
 	$T_i: \Xx \leadsto \Yy_i$  for i = 1,2,     {\it the join  of  $T_1$ and $T_2$ }  is  the  probabilistic morphism  $T_1 \cdot T_2: \Xx \leadsto \Yy_1 \times \Yy_2$   whose  generating   mapping  is $\overline{T_1 \cdot T_2}: \Xx \to \Pp (\Yy_1 \times \Yy_2)$  given by
 	\begin{equation}\label{eq:joint}
 		\overline{T_1 \cdot T_2} (x): = \mathfrak m^2 (\overline   T_1(x), \overline T_2(x)).
 	\end{equation}

 	(2)  Given  a        probabilistic  morphism  $T: \Xx \leadsto \Yy$  we denote  the join  of  $\Id_\Xx$ with  $T$ by
 	$\Gamma  _T: \Xx \leadsto  \Xx \times  \Yy$  and call  it  the   {\it graph  of $T$}. 
 \end{definition}
 
 \begin{remark}\label{rem:joint} 
 	(1)	If  $\kappa: \Xx \to \Yy$  is  a measurable  mapping, regarded as a probabilistic morphism, then
 	the graph $\Gamma_\kappa: \Xx \leadsto \Xx \times \Yy$  is a  measurable mapping  defined  by $x \mapsto  (x, \kappa (x))$  for $x\in \Xx$, since  $\overline \Id_\Xx = \delta \circ  \Id_\Xx$, and  therefore $\mathfrak m^2  (\overline\Id_\Xx (x),\overline \kappa (x) )   =  \delta  _x \otimes \delta _{\kappa (x)} = \delta \circ \Gamma _{\kappa} (x)$.

 	(2)	 The notion of the graph of a probabilistic morphism  $f$ has been appeared  first in Jost-L\^e-Tran \cite{JLT2021}, the arXiv version,  but without a definition.  The first  definition of this  concept  has  been  given in  Fritz-Gonda-Perrone-Rischel's paper \cite{FGBR2020}, where  they call  the graph of  $f$  the {\it input-copy version} or {\it bloom} of $f$.  
 \end{remark}
 \begin{definition}[Almost surely equality]\label{def:asep}  \cite{Fritz2019}  Let $\mu \in \Pp(\Xx)$.   Two  measurable mappings $T, T':\Xx\to  \Pp(\Yy)$  will be called  {\it equal   $\mu$-a.e.} (with the shorthand notation $ T \stackrel{\mu\mhyp a.e.}{=}  T' $), if 
 	for  any $B \in \Sigma_\Yy$
 	$$\mu\{x\in \Xx: T(x)(B) \not =  T'(x)(B)\} = 0.$$
 \end{definition}

 \begin{theorem}[Characterization of regular conditional probability measures]\label{thm:marginal}
 	
 	\
 	
 	(1) A measurable mapping  $\overline T : \Xx \to \Pp (\Yy)$ is a regular conditional  probability measure for $\mu\in \Pp(\Xx \times \Yy)$   with  respect  to the    projection  $\Pi_\Xx$ if  and only if
 	\begin{equation}\label{eq:graph}
 		(\Gamma_T)_* \mu_\Xx =\mu.
 	\end{equation}
 	
 	(2)  If  $ \overline  T,\,  \overline T':\Xx \to \Pp (\Yy)$ are  regular conditional  probability measures for $\mu\in \Pp(\Xx \times \Yy)$   with  respect  to the    projection  $\Pi_\Xx$,  then  $  \overline  T \stackrel{\mu\mhyp a.e.}{=}  \overline T'$. Conversely,  if  $T$ is a  regular conditional  probability measure for $\mu\in \Pp(\Xx \times \Yy)$   with  respect  to the    projection  $\Pi_\Xx$  and  $  \overline  T \stackrel{\mu\mhyp a.e.}{=}  \overline T'$,  then $\overline T':\Xx \to \Pp (\Yy)$ is a regular conditional  probability measure for $\mu\in \Pp(\Xx \times \Yy)$   with  respect  to the    projection  $\Pi_\Xx$.
 \end{theorem}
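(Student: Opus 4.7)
The key calculation for (1) is to unpack $(\Gamma_T)_*\mu_\Xx$ on measurable rectangles, where $\mu_\Xx := (\Pi_\Xx)_*\mu$. By Definition~\ref{def:graph} the graph satisfies $\overline{\Gamma_T}(x) = \mathfrak m^2(\delta_x, \overline T(x)) = \delta_x \otimes \overline T(x)$, so substituting into the defining formula \eqref{eq:Mhomomorphism} of $S(T)_*$ gives, for every $A \in \Sigma_\Xx$ and $B \in \Sigma_\Yy$,
\begin{equation*}
(\Gamma_T)_*\mu_\Xx (A\times B) = \int_\Xx \delta_x(A)\,\overline T(x)(B)\, d\mu_\Xx(x) = \int_A \overline T(x)(B)\, d\mu_\Xx(x),
\end{equation*}
which is exactly the right-hand side of \eqref{eq:regpr}. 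Hence $(\Gamma_T)_*\mu_\Xx$ and $\mu$ agree on every rectangle if and only if $\overline T$ is a regular conditional probability measure for $\mu$ with respect to $\Pi_\Xx$. Since both $(\Gamma_T)_*\mu_\Xx$ and $\mu$ are probability measures on $\Sigma_\Xx\otimes\Sigma_\Yy$, and the measurable rectangles form a $\pi$-system generating this $\sigma$-algebra, the Dynkin $\pi$-$\lambda$ theorem upgrades agreement on rectangles to equality of measures, giving the equivalence \eqref{eq:graph}.

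For (2), interpreting the $\mu$-a.e.\ condition in the sense of Definition~\ref{def:asep} applied to the marginal $\mu_\Xx$, the forward implication is the standard uniqueness of conditional expectations recast in this framework. Fix $B\in\Sigma_\Yy$. If both $\overline T$ and $\overline T'$ satisfy \eqref{eq:regpr}, then
\begin{equation*}
\int_A \overline T(x)(B)\,d\mu_\Xx(x) = \mu(A\times B) = \int_A \overline T'(x)(B)\,d\mu_\Xx(x)
\end{equation*}
for every $A \in \Sigma_\Xx$. The functions $x \mapsto \overline T(x)(B)$ and $x \mapsto \overline T'(x)(B)$ are bounded and measurable by the very definition of $\Sigma_w$, so this forces $\overline T(\cdot)(B) = \overline T'(\cdot)(B)$ $\mu_\Xx$-a.e., which is the condition of Definition~\ref{def:asep}. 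For the converse, if $\overline T$ is a regular conditional probability for $\mu$ and $\overline T' \stackrel{\mu\mhyp a.e.}{=} \overline T$, then for each $B\in\Sigma_\Yy$ and $A\in\Sigma_\Xx$ replacing $\overline T$ by $\overline T'$ in the integrand on a $\mu_\Xx$-null set leaves the integral unchanged, so $\overline T'$ inherits \eqref{eq:regpr} verbatim.

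The proof is essentially definitional; the only step that requires genuine care is verifying that $\overline{\Gamma_T}$ takes values in $\Pp(\Xx\times\Yy)$ as a measurable mapping, which is precisely the content of Proposition~\ref{prop:lawvera}(1) applied to $\mathfrak m^2$. With that in hand, the argument reduces to tracing the definition \eqref{eq:Mhomomorphism} of the pushforward on rectangles and appealing to uniqueness of a probability measure determined by its values on a generating $\pi$-system.
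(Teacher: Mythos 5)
Your proposal is correct and follows essentially the same route as the paper: part (1) is the identical computation of $(\Gamma_T)_*\mu_\Xx$ on rectangles compared against \eqref{eq:regpr} (you merely make explicit the $\pi$-$\lambda$ step that the paper leaves implicit), and part (2) is the same a.e.-uniqueness argument, which the paper carries out by choosing $A$ to be the sets where $\overline T(\cdot)(B)-\overline T'(\cdot)(B)$ is positive or negative. No gaps.
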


\begin{proof}
    (1) To prove the first  assertion of  Theorem \ref{thm:marginal}  we   compute  the value  of  $(\Gamma _T)_* \mu_\Xx$   for $A \times B$ where $A \in \Sigma_\Xx$ and $B \in \Sigma_\Yy$
    \begin{equation} \label{eq:jointmeasure}
    (\Gamma_T)_* \mu_\Xx (A\times B) = \int_\Xx  \overline {\Gamma _T} (x) (A\times B)d\mu_\Xx(x) = \int _A \overline T (B|x)d\mu_\Xx.
   \end{equation}
Comparing with Equation \eqref{eq:regpr}, we conclude that  $(\Gamma_T)_*\mu_\Xx = \mu$ if and only if $\overline T: \Xx \to \Pp (\Yy)$ is a product  regular conditional   probability measure  for $\mu$.

(2)  Now   assume that  $ \overline  T,\,  \overline T':\Xx \to \Pp (\Yy)$ are  regular conditional  probability measures for $\mu\in \Pp(\Xx \times \Yy)$   with  respect  to the    projection  $\Pi_\Xx$.   Let $B \in \Sigma_\Yy$. Set $$X_B ^+: =   \{ x\in \Xx: \overline T (x)(B) - \overline {T' }(x) (B) > 0\}.$$
Since  $\overline {T}, \overline{T'} \in \mathbf{Meas} (\Xx, \Pp (\Yy))	$ the  set  $X_B ^+ $  is measurable.  By Equation \eqref{eq:graph},  we have
\begin{equation}\label{eq:pushgamma} 
\big ((\Gamma_T)_*\mu\big )(X_B^+\times B)- \big ((\Gamma_T)_*\mu\big)(X_B ^+\times B) = \int_{X_B ^+} (\overline {T}(x)- \overline {T'}(x)) (B)\, d\mu (x) = 0.
\end{equation}

It follows that $\mu (X_B^+) = 0$.  Similarly we   conclude  that
$$\mu \{x\in \Xx:  \overline{T} (x)(B) - \overline{T'} (x) (B)\le  0\}= 0.$$ 
This implies  that  $  \overline{T} \stackrel{\mu_\Xx\mhyp a.e.}{=} \overline{ T' }$.

 Let $\mu \in \Pp (\Xx)$ and $ \overline T \stackrel{\mu_\Xx\mhyp a.e.}{=} \overline T' $.   Using  Equation \eqref{eq:jointmeasure}, we conclude  the the   second assertion  in Theorem \ref{thm:marginal}(ii).
\end{proof}

In  the following lemma  we collect  several  useful   identities  concerning 
graphs of probabilistic morphisms.

\begin{lemma}\label{lem:decompo}  (1) Denote by $\Pi_\Yy$  the projection $\Xx \times \Yy \to \Yy$. Then for  any $T \in \Probm (\Xx, \Yy)$ we have
	\begin{eqnarray}
	T= \Pi_\Yy \circ  \Gamma_T \label{eq:tdecomp},\\
	\Pi_\Xx \circ \Gamma_T  = \Id _\Xx. \label{eq:inverse}
	\end{eqnarray}

(2)  Let  $p_1: \Xx \leadsto \Yy$ and $p_2: \Yy \leadsto  \Zz$ be  probabilistic morphisms. Then we have
\begin{equation}\label{eq:graphcomp} \Gamma_{p_2 \circ p_1} = (\Id_\Xx \times p_2) \circ \Gamma _{p_1}.
\end{equation}
$$
\xymatrix{ 
	&  \Xx \times \Yy \ar@{~>}[r]^{ (\Id_\Xx\times p_2)} & \Xx \times \Zz\\
	\Xx \ar@{~>}[ur]^{\Gamma_{p_1}} \ar@{~>}[urr]^{\Gamma_{p_2 \circ p_1}} \ar@{~>}[r]^{p_1} & \Yy\ar@{~>} [r]^{p_2} & \Zz
}
$$

(3) Let $\kappa \in \Meas (\Xx,\Yy)$  and $\pb \in \Probm (\Yy, \Zz)$. Then we have
\begin{equation}\label{eq:graphpusha}
	(\kappa \times \Id_\Zz)\circ  \Gamma_{\pb \circ \kappa}  =  \Gamma _{ \pb }\circ \kappa .	
\end{equation}
	\end{lemma}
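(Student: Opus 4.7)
The plan is to verify each identity by unwinding the generating map of both sides and checking agreement pointwise. The main tool is the formula $\overline{\Gamma_T}(x) = \mathfrak{m}^2(\delta_x, \overline{T}(x)) = \delta_x \otimes \overline{T}(x)$ from Definition \ref{def:graph}, together with the composition formula $\overline{T_2 \circ T_1} = P_*(T_2)\circ \overline{T_1}$ from \eqref{eq:tcompose}. Throughout, I will use that for a measurable map $\kappa \in \Meas(\Xx,\Yy)$ regarded as a probabilistic morphism, pushforward acts by $(\kappa)_*\delta_x = \delta_{\kappa(x)}$.

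For part (1), I compute $\overline{\Pi_\Yy \circ \Gamma_T}(x) = (\Pi_\Yy)_*\bigl(\delta_x \otimes \overline{T}(x)\bigr) = \overline{T}(x)$ using the fact that the pushforward of a product measure by the second projection is the second marginal. Similarly, $\overline{\Pi_\Xx \circ \Gamma_T}(x) = (\Pi_\Xx)_*\bigl(\delta_x \otimes \overline{T}(x)\bigr) = \delta_x = \overline{\Id_\Xx}(x)$. Since two probabilistic morphisms coincide when their generating maps do, both identities follow.

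For part (2), the LHS has generating map $x \mapsto \delta_x \otimes \overline{p_2 \circ p_1}(x) = \delta_x \otimes P_*(p_2)\bigl(\overline{p_1}(x)\bigr)$ by \eqref{eq:tcompose}. For the RHS, I interpret $\Id_\Xx \times p_2 \colon \Xx \times \Yy \leadsto \Xx \times \Zz$ as the probabilistic morphism with generating map $(x,y) \mapsto \delta_x \otimes \overline{p_2}(y)$. Applying \eqref{eq:tcompose} again and computing the integral defining pushforward along a Markov kernel, I get
\begin{align*}
\overline{(\Id_\Xx \times p_2)\circ \Gamma_{p_1}}(x)
&= \int_{\Xx\times\Yy} \bigl(\delta_{x'} \otimes \overline{p_2}(y)\bigr)\, d\bigl(\delta_x \otimes \overline{p_1}(x)\bigr)(x',y) \\
&= \delta_x \otimes \int_\Yy \overline{p_2}(y)\, d\overline{p_1}(x)(y)
= \delta_x \otimes P_*(p_2)\bigl(\overline{p_1}(x)\bigr),
\end{align*}
matching the LHS. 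The factorization of the integral uses Fubini together with the fact that one factor is a Dirac mass.

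For part (3), since $\kappa$ is a measurable map and $\pb$ a probabilistic morphism, $\overline{\pb \circ \kappa}(x) = \overline{\pb}(\kappa(x))$. Hence $\overline{\Gamma_{\pb\circ\kappa}}(x) = \delta_x \otimes \overline{\pb}(\kappa(x))$, and pushing forward by the measurable map $\kappa \times \Id_\Zz$ gives $\delta_{\kappa(x)} \otimes \overline{\pb}(\kappa(x))$. On the other hand, $\overline{\Gamma_\pb \circ \kappa}(x) = \overline{\Gamma_\pb}(\kappa(x)) = \delta_{\kappa(x)} \otimes \overline{\pb}(\kappa(x))$, so the two generating maps agree.

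The only nontrivial step is the computation in part (2): one has to check that $\Id_\Xx \times p_2$ really is the probabilistic morphism with the tensor-product generating map described above (this is essentially the definition, but should be stated), and then invoke the measurability of $\mathfrak{m}^2$ from Proposition \ref{prop:lawvera}(1) together with Fubini to factor the Dirac component out of the integral. Parts (1) and (3) are essentially bookkeeping once the generating map of a graph is recorded explicitly.
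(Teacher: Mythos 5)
Your proof is correct, and for parts (1) and (2) it follows the same route as the paper: compare generating maps pointwise using $\overline{\Gamma_T}(x)=\delta_x\otimes\overline T(x)$ and the composition formula \eqref{eq:tcompose}. In fact, for part (2) you supply more detail than the paper does: the paper simply asserts the identity $(\Id_\Xx\times p_2)_*\bigl(\delta_x\otimes\overline{p_1}(x)\bigr)=\delta_x\otimes (p_2)_*\overline{p_1}(x)$, whereas you justify it by evaluating the defining integral on rectangles and factoring out the Dirac component; you are also right to flag that the generating map of $\Id_\Xx\times p_2$, namely $(x,y)\mapsto\delta_x\otimes\overline{p_2}(y)$, is nowhere defined in the paper and must be recorded. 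Part (3) is where you genuinely diverge: the paper deduces \eqref{eq:graphpusha} from \eqref{eq:graphcomp} by rewriting $\Gamma_{\pb\circ\kappa}=(\Id_\Xx\times\pb)\circ\Gamma_\kappa$ and then interchanging the two product morphisms, $(\kappa\times\Id_\Zz)\circ(\Id_\Xx\times\pb)=\kappa\times\pb$, while you compute both generating maps directly as $\delta_{\kappa(x)}\otimes\overline{\pb}(\kappa(x))$. Your version is arguably cleaner, since it avoids the unproved interchange step and the final identity $(\kappa\times\pb)\circ\Gamma_\kappa=\Gamma_\pb\circ\kappa$ that the paper also leaves implicit; the paper's version has the advantage of exhibiting (3) as a formal consequence of (2). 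No gaps.
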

\begin{proof} (1) For any $ x\in \Xx$ we have
$$ \overline {\Pi_\Yy \circ \Gamma _T} (x) \stackrel{\eqref{eq:tcompose}}{=} (\Pi_\Yy)_*  \overline{\Gamma_T} (x)\stackrel{\eqref{eq:joint}}{=} \overline T(x).$$
This proves \eqref{eq:tdecomp}. Similarly, we have
$$ \overline{\Pi_\Xx \circ \Gamma_T} (x) = \delta _x.$$
This  proves \eqref{eq:inverse}.

(2)    Using  \eqref{eq:tcompose},   for any $x \in \Xx$ we  have
$$\overline{(\Id_\Xx, p_2)\circ \Gamma _{p_1} } (x) =  (\Id_\Xx\times p_2)_*(\delta_x \cdot \overline{p_1}(x))$$
$$= \delta_x \cdot (p_2)_*\overline {p_1} (x) = \Gamma_{p_2 \circ p_1} (x).$$
 This proves  Equality  \eqref{eq:graphcomp}.

(3) By\eqref{eq:graphcomp},   we have
\begin{align}\label{eq:graphpusht}
	(\kappa \times \Id_\Zz) \circ  \Gamma_{\pb\circ \kappa}   &= 	(\kappa \times \Id_\Zz)\circ (\Id _\Xx \times  \pb)\circ  \Gamma _\kappa\nonumber\\ 
	& = (\kappa \times \pb) \circ \Gamma_\kappa\nonumber\\
	& = \Gamma _\pb  \circ \kappa.
\end{align}
This completes the proof of Lemma \ref{lem:decompo}.
\end{proof}
 
 \subsection{A Bayesian category  of  Bayesian inversions}
 Let  us first recall  the concept  of  a Bayesian statistical model, which plays a  key role in Bayesian statistics, see Remark \ref{rem:posterior}, and in our solution of    Problem SBI.
 
 \begin{definition}\label{def:bayesstat} \cite[Definition 5]{JLT2021}   A {\it Bayesian statistical model} \ is  a   quadruple $(\Theta, \mu_\Theta, \pb, \Xx)$,  where  $(\Theta,\mu_\Theta)$  is a probability space,   and  $\pb\in \Meas\big( \Theta , \Pp(\Xx)\big)$, which is  called a {\it sampling operator}.  The family of  probability measures $ \{\pb (\theta) \in \Pp  (\Xx), \theta \in \Theta\}$ is called {\it a parametric family of  sampling distributions of data $x\in \Xx$}   with parameter $\theta\in \Theta$.  
 	A Bayesian statistical model $(\Theta, \mu_\Theta, \pb, \Xx)$  is called {\it identifiable},  if  $\pb$ is an injective map, i.e.,  if the underlying statistical  model $(\Theta,\pb, \Xx)$  is identifiable.  A  Bayesian statistical model
 	$(\Theta, \mu_\Theta, \pb, \Xx)$ is called {\it dominated  by $\nu$}, if  $\nu$ is a $\sigma$-finite  measure  on $\Xx$ such that  for  any $\theta \in \Theta$ we have  $\pb (\theta) \ll \nu$.  The {\it predictive  distribution} $\mu_\Xx \in \Pp (\Xx)$  of    a    Bayesian statistical model $(\Theta, \mu_\Theta, \pb, \Xx)$ is defined as the {\it prior marginal probability} of  $x$, i.e., $\mu_\Xx : = (\Pi_\Xx)_*\mu$,  where $\mu: = (\Gamma_{\underline \pb})_*\mu_\Theta \in \Pp (\Theta\times \Xx) $ is the joint distribution of $\theta\in \Theta$ and $x\in \Xx$  whose regular conditional probability measure  with respect to the projection $\Pi_\Theta: \Theta \times \Xx \to \Theta$ is $\pb:\Theta \to \Pp(\Xx)$. 
 \end{definition}

\begin{remark}\label{rem:posterior}  In Bayesian statistics,   one    uses   a Bayesian statistical model $(
	\Theta, \mu_\Theta, \pb, \Xx)$  to    summarize   our knowledge about
	the  sampling  distribution $\pb^n:\Theta \to \Pp (\Xx^n)$ of data $ S_n \in \Xx^n$  conditioning  on parameter $\theta \in \Theta$. One {\it interprets}
	the value  of a   Bayesian  inversion $\qb^{(n)} (S_n) \in \Pp (\Theta)$ of $\pb^n$ relative to a {\it prior distribution} $\mu_\Theta \in \Pp (\Theta) $ as  the updated    value  of    our certainty about  parameter  $\theta$ after seeing   $S_n \in \Xx^n$ \cite{Schervish1997}, \cite{GV2017}.
\end{remark}

   Let $(\Theta, \mu_\Theta, \pb, \Xx)$ be a Bayesian statistical model. By Lemma  \ref{lem:decompo}, $\mu_\Xx = (\underline{\pb})_*\mu_\Theta$. Equivalently,  for   any $  A \in \Sigma_\Xx$ we have
  \begin{equation}\label{eq:priordis}
 	\mu_\Xx (A) = \int _\Theta  \pb(A|\theta)d\mu_\Theta (\theta). 
 \end{equation}   

We shall call   the map $\sigma_{\Xx, \Theta}: \Xx\times \Theta \to \Theta\times \Xx,  (x, \theta) \mapsto  (\theta, x),$ a {\it mirror  map}. 

 \begin{definition}\label{def:bayesinv}
 Let  $(\Theta, \mu_\Theta, \pb, \Xx)$ be a   Bayesian statistical model.
 A {\it Bayesian inversion} \index{Bayesian inversion} $ \qb: =\qb (\cdot \|\pb, \mu_\Theta)\in \Meas\big( \Xx, \Pp (\Theta)\big)$  of a Markov kernel  $\pb\in \Meas\big ( \Theta, \Pp (\Xx)\big )$   relative to  $\mu_\Theta$ is a Markov kernel  such that
 \begin{equation}\label{eq:bayesinv0}
 	(\sigma_{\Xx, \Theta})_*	(\Gamma_{\underline \qb})_*\mu_\Xx = (\Gamma_{\underline \pb})_* \mu_\Theta
 \end{equation}
 where  $\mu_\Xx = (\underline \pb)_* \mu_\Theta$  is the predictive distribution  of  the Bayesian model.
 \end{definition}

\begin{lemma}\label{lem:bsuff}  A  measurable mapping $\qb: \Xx \to \Pp (\Theta)$ is a  Bayesian inversion  of $\pb: \Theta \to \Pp (\Xx)$  relative to 
	$\mu_\Theta \in \Pp (\Theta)$ if and only for all $ B \in  \Sigma _\Theta$  we have
\begin{equation}\label{eq:bsuff}
	\qb (B|\cdot) = \frac{d (\underline \pb)_*  (1_B \mu_\Theta)}{d  (\underline \pb)_*  ( \mu_\Theta)} \in L^1 (\Xx, \pb_* \mu_\Theta).
\end{equation}
\end{lemma}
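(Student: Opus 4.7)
The plan is to translate the Bayesian inversion condition into the defining integral identity for regular conditional probability measures, and then read off the Radon--Nikodym derivative. Since Definition \ref{def:bayesinv} says $\qb$ is a Bayesian inversion iff
$(\sigma_{\Xx,\Theta})_*(\Gamma_{\underline{\qb}})_*\mu_\Xx = (\Gamma_{\underline{\pb}})_*\mu_\Theta$,
the natural first move is to apply the mirror map to both sides and set $\nu := (\sigma_{\Theta,\Xx})_*(\Gamma_{\underline{\pb}})_*\mu_\Theta \in \Pp(\Xx \times \Theta)$. The marginal $(\Pi_\Xx)_*\nu$ is exactly $\mu_\Xx = (\underline{\pb})_*\mu_\Theta$ by construction, so Theorem \ref{thm:marginal}(1) tells us $\qb$ is a Bayesian inversion if and only if $\qb$ is a regular conditional probability measure for $\nu$ with respect to $\Pi_\Xx$.

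Next I would invoke \eqref{eq:regpr} for $\nu$: the above is equivalent to the requirement that for every $A \in \Sigma_\Xx$ and $B \in \Sigma_\Theta$,
\begin{equation*}
\int_A \qb(B|x)\,d\mu_\Xx(x) = \nu(A \times B) = (\Gamma_{\underline{\pb}})_*\mu_\Theta(B \times A).
\end{equation*}
The right-hand side I would unfold with \eqref{eq:jointmeasure} to obtain
\begin{equation*}
\int_B \pb(A|\theta)\,d\mu_\Theta(\theta) = \int_\Theta \pb(A|\theta)\,d(1_B\mu_\Theta)(\theta) = \int_\Xx 1_A\,d(\underline{\pb})_*(1_B\mu_\Theta),
\end{equation*}
where the last equality is just the definition of pushforward of the finite measure $1_B\mu_\Theta$ under $\underline{\pb}$. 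So, fixing $B$, we have reduced the Bayesian inversion condition to the identity $\int_A \qb(B|x)\,d\mu_\Xx(x) = \int_A d(\underline{\pb})_*(1_B\mu_\Theta)$ for every $A \in \Sigma_\Xx$.

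The remaining step is to recognize this as the defining equation for a Radon--Nikodym derivative. The only real subtlety is that one needs $(\underline{\pb})_*(1_B\mu_\Theta) \ll (\underline{\pb})_*\mu_\Theta = \mu_\Xx$; this is where I would cite Remark \ref{rem:functors}(3), which guarantees that absolute continuity is preserved under the functor $S$, applied to the trivial inequality $1_B\mu_\Theta \ll \mu_\Theta$ in $\Ss(\Theta)$. With absolute continuity in hand, the identity holding for all $A \in \Sigma_\Xx$ is equivalent (by uniqueness of the Radon--Nikodym derivative) to
\begin{equation*}
\qb(B|\cdot) = \frac{d(\underline{\pb})_*(1_B\mu_\Theta)}{d(\underline{\pb})_*(\mu_\Theta)} \in L^1(\Xx, \pb_*\mu_\Theta),
\end{equation*}
which is precisely \eqref{eq:bsuff}. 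The argument is essentially bookkeeping once the right translations are lined up; the only substantive input beyond Theorem \ref{thm:marginal} is the pushforward preservation of absolute continuity, so I expect no serious obstacle.
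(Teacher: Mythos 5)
Your proposal is correct and follows essentially the same route as the paper: both reduce the Bayesian inversion condition to the identity $\int_A \qb(B|x)\,d\mu_\Xx = \int_\Theta \pb(A|\theta)\,d(1_B\mu_\Theta) = (\underline{\pb})_*(1_B\mu_\Theta)(A)$ for all $A$, invoke Remark \ref{rem:functors}(3) for the absolute continuity making the Radon--Nikodym derivative well defined, and conclude by a.e.\ uniqueness. The only cosmetic difference is that you run a single chain of equivalences through Theorem \ref{thm:marginal}(1), whereas the paper proves the two implications separately and defers the uniqueness step to the argument of Theorem \ref{thm:marginal}(2).
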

 Before giving   a proof of Lemma \ref{lem:bsuff}  we note that the RHS of  \eqref{eq:bsuff} is well-defined  by Remark \ref{rem:functors}.

\begin{proof}[Proof of Lemma \ref{lem:bsuff}]   First   we shall prove  the ``if" assertion.
	Assume  that    $\qb$ and $\pb$ satisfy \eqref{eq:bsuff}.  
For $B \in \Sigma_\Theta$  and $ A \in \Sigma _\Xx$, we compute  
\begin{equation} 
	(\Gamma_{\underline{\qb}})_*\big( (\underline {\pb} )_* \mu_\Theta\big) (B \times A) 	= \int_A\qb(B|x)\, d \big ((\underline{\pb})_* \mu_\Theta\big)(x).\label{eq:compo3}
\end{equation}

Plugging  \eqref{eq:bsuff} into  \eqref{eq:compo3}, we obtain 
\begin{align}
	(\Gamma_{\underline{\pb}})_* \big((\underline{\qb})_* \mu_\Xx\big) (B \times A) & = \int_A  \frac{ d(\underline \pb)_* (1_B \mu_\Theta)}{d({\underline \pb})_*\mu_\Theta}  d (\underline{\pb})_* \mu_\Theta\nonumber\\
	& = \int _A d(\underline \pb)_* (1_B \mu_\Theta)\nonumber \\
	 &=\int_\Theta   \pb (A|\theta)\, d1_B \mu_\Theta(\theta) \hspace{1cm} (\text{by } \eqref{eq:Mhomomorphism})\nonumber\\
&	=\big ((\Gamma _{\underline{\pb}})_*\mu_\Theta\big ) (A\times B).
	\label{eq:compo4}
\end{align}

From Equation \eqref{eq:compo4}  we conclude that  $\qb$ is  a Bayesian  inversion of $\pb$ relative to $\mu_\Theta$.
 
Next, we shall prove the ``only if"  assertion.    Assume that   $\qb$ is  a Bayesian  inversion of $\pb$ relative to $\mu_\Theta$.  Note that  for any  $A \in \Sigma_\Xx$ and $B \in \Sigma _\Theta$  we have
\begin{align}
	(\Gamma_{\underline \qb})_*\big ((\underline{\pb})_* \mu_\Theta\big) (A \times B) & =	\int_A\qb(B|x)\, d \big ((\underline{\pb})_* \mu_\Theta\big)(x), \label{eq:bsuff1}\\
	(\Gamma _{\underline \pb})_*\mu_\Theta (B \times A) &=  \int_A  \frac{ d(\underline \pb)_* (1_B \mu_\Theta)}{d({\underline \pb})_*\mu_\Theta}  d (\underline{\pb})_* \mu_\Theta. \label{eq:bsuff2}
\end{align}
Since  $\qb$ is  a Bayesian  inversion of $\pb$ relative to $\mu_\Theta$,  the RHS  of \eqref{eq:bsuff1} is equal to  the RHS  of \eqref{eq:bsuff2}.  
The same  argument as in the   proof  of  Theorem \ref{thm:marginal}(2) yields
Equation \eqref{eq:bsuff} immediately. This completes  the proof of Lemma \ref{lem:bsuff}.
\end{proof}
 
 \begin{proposition}\label{prop:bayesgroup}   (1)  If $\qb , \qb' \in \Meas (\Yy, \Pp (\Xx))$ are two Bayesian  inversions  of  $\pb: \Xx \to \Pp (\Yy)$ relative to  $\mu_\Xx \in \Pp (\Yy)$ then  $\qb \stackrel{(\underline{\qb})_* \mu_\Xx\mhyp a.e.} {=}\qb'$.
 	
 (2) Assume that  $\qb: \Yy \to \Pp (\Xx)$ is a Bayesian inversion of  $\pb: \Xx \to \Pp (\Yy)$ relative  to $\mu_\Xx \in \Pp (\Xx)$. Then  $\pb: \Xx \to \Pp (\Yy)$ is a Bayesian  inversion of $\qb$ relative  to  $(\underline{\pb})_* \mu_\Xx$.
 
 (3)  Let  $\underline{\qb_1} : \Xx_2 \leadsto \Xx_1$ be a  Bayesian inversion of
 $\underline{\pb_1}: \Xx_1 \leadsto \Xx_2$  relative to  $\mu_1 \in \Pp (\Xx_1)$. Assume that $\underline{\qb_2} : \Xx_2 \leadsto \Xx_3$ is a  Bayesian inversion of  $\underline{\pb_2}: \Xx_2 \leadsto \Xx_3 $ relative  to  $(\underline{\pb_2})_* \mu_1$.   Then  $ \underline{\qb_1} \circ\underline  {\qb_2}: \Xx_3 \leadsto \Xx_1$ is a Bayesian inversion  of  $\underline{\pb_2}\circ \underline {\pb_1}$ relative to  $\mu_1$.
 \end{proposition}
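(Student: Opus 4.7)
The plan is to treat the three parts separately, using the two characterizations of Bayesian inversions (Theorem \ref{thm:marginal} for the graph-theoretic one, Lemma \ref{lem:bsuff} for the Radon--Nikodym one) together with the graph identities of Lemma \ref{lem:decompo}.

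For part (1), I would unwrap the defining equation for both $\qb$ and $\qb'$ and cancel the common mirror factor: writing $\mu_\Yy := (\underline{\pb})_*\mu_\Xx$, applying the inverse mirror $(\sigma_{\Xx,\Yy})_*$ to both defining identities shows that both $(\Gamma_{\underline{\qb}})_*\mu_\Yy$ and $(\Gamma_{\underline{\qb'}})_*\mu_\Yy$ coincide with $(\sigma_{\Xx,\Yy})_*(\Gamma_{\underline{\pb}})_*\mu_\Xx$ in $\Pp(\Yy\times\Xx)$. Theorem \ref{thm:marginal}(1) then identifies both $\overline{\qb}$ and $\overline{\qb'}$ as regular conditional probability measures for this common joint measure with respect to $\Pi_\Yy$, so Theorem \ref{thm:marginal}(2) yields $\mu_\Yy$-a.e.\ equality.

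For part (2), applying $(\sigma_{\Xx,\Yy})_*$ to the hypothesis produces $(\Gamma_{\underline{\qb}})_*\mu_\Yy = (\sigma_{\Xx,\Yy})_*(\Gamma_{\underline{\pb}})_*\mu_\Xx$. The crucial intermediate step is the identity $(\underline{\qb})_*\mu_\Yy = \mu_\Xx$: I would obtain this by pushing the hypothesis forward by $\Pi_\Xx$, using \eqref{eq:tdecomp} on the left (to collapse $\Pi_\Xx\circ\sigma_{\Yy,\Xx}\circ\Gamma_{\underline{\qb}}$ to $\underline{\qb}$) and \eqref{eq:inverse} on the right (to collapse $\Pi_\Xx\circ\Gamma_{\underline{\pb}}$ to $\Id_\Xx$). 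Substituting this back, the preceding display becomes exactly the defining equation for $\pb$ to be a Bayesian inversion of $\qb$ relative to $\mu_\Yy = (\underline{\pb})_*\mu_\Xx$.

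For part (3), the most efficient route is Lemma \ref{lem:bsuff}. Setting $\mu_2 := (\underline{\pb_1})_*\mu_1$ and $\mu_3 := (\underline{\pb_2})_*\mu_2$, the two hypotheses give the Radon--Nikodym identities $\qb_1(B|\cdot)\,\mu_2 = (\underline{\pb_1})_*(1_B\mu_1)$ for $B \in \Sigma_{\Xx_1}$ and $\qb_2(C|\cdot)\,\mu_3 = (\underline{\pb_2})_*(1_C\mu_2)$ for $C \in \Sigma_{\Xx_2}$. By part (2), $\pb_2$ is itself a Bayesian inversion of $\qb_2$ relative to $\mu_3$, giving dually $(\underline{\qb_2})_*(1_A\mu_3) = \pb_2(A|\cdot)\,\mu_2$ for $A \in \Sigma_{\Xx_3}$. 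Fixing $A, B$ and integrating $(\qb_1 \circ \qb_2)(B|\cdot)$ against $\mu_3$ over $A$, Theorem \ref{thm:clbanach}(3) rewrites this as $\int_{\Xx_2}\qb_1(B|\cdot)\,d(\underline{\qb_2})_*(1_A\mu_3)$; substituting the dual identity reduces it to $\int_{\Xx_2}\qb_1(B|y)\pb_2(A|y)\,d\mu_2(y) = \int_{\Xx_2}\pb_2(A|y)\,d(\underline{\pb_1})_*(1_B\mu_1)(y)$, and a final application of \eqref{eq:Mhomomorphism} and functoriality of the pushforward collapses this to $(\underline{\pb_2\circ\pb_1})_*(1_B\mu_1)(A)$. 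Lemma \ref{lem:bsuff} then certifies $\qb_1\circ\qb_2$ as a Bayesian inversion of $\pb_2\circ\pb_1$ relative to $\mu_1$.

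The main obstacle throughout is the bookkeeping of factor orders in the mirror maps $\sigma_{\cdot,\cdot}$ and of graphs attached to each posterior, especially when juggling two consecutive posteriors in (3); the graph identities of Lemma \ref{lem:decompo} and the functoriality supplied by Proposition \ref{prop:chentsovbanach} and Remark \ref{rem:functors} should make the manipulations collapse cleanly in each case.
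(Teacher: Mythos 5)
Your proposal is correct and follows essentially the same route as the paper: part (1) via Theorem \ref{thm:marginal}, part (2) by observing that the defining equation \eqref{eq:bayesinv0} is symmetric under the mirror map once one checks $(\underline{\qb})_*(\underline{\pb})_*\mu_\Xx=\mu_\Xx$, and part (3) by a double application of Lemma \ref{lem:bsuff} combined with the (contravariant) functoriality of Theorem \ref{thm:clbanach} and Proposition \ref{prop:chentsovbanach}. Your write-up actually supplies a detail the paper leaves implicit in part (2) --- the derivation of $(\underline{\qb})_*\mu_\Yy=\mu_\Xx$ from \eqref{eq:tdecomp} and \eqref{eq:inverse} --- but this is an elaboration, not a different method.
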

 
 \begin{proof}  The first assertion of Proposition \ref{prop:bayesgroup} is a direct consequence of Theorem \ref{thm:marginal} (2).
 	
 	The second assertion  follows directly from  Definition \ref{def:bayesinv}.
 
 Let us prove the last assertion of Proposition \ref{prop:bayesgroup}.   For  $A \in \Sigma_{\Xx_1}$,  we have
 \begin{align*}(\underline{\qb_2} \circ  \underline{\qb_1})_* (1_A\mu_1) &=  (\underline{\qb_2})_* \circ (\underline{\qb_1})_*  (1_A \mu_1) \hspace{3cm} \text{ (by}   \eqref{eq:Mhomomorphism}) \\
 	& = (\underline{\qb_2})_*  \big( \pb_1 ^* (1_A) (\underline{\qb_1})_* \mu\big)\hspace{2,7cm}  \text{ (by     Lemma \ref{lem:bsuff})}  )\\
 	& =  \underline{\pb_2}^* \circ \underline{\pb_1}^* (1_A) \big((\underline{\qb_2} \circ \underline{\qb_1})_* \mu \big)  \hspace{1,8cm}\text{ (by   Lemma \ref{lem:bsuff})}\\
 	& =  (\underline {\pb_1} \circ \underline {\pb_2})^*  (1_A)\big((\underline{\qb_2} \circ \underline{\qb_1})_* \mu \big)  \hspace{1,7cm}\text{ (by Theorem \ref{thm:clbanach}(4))}.
 \end{align*}
Taking into account Lemma \ref{lem:bsuff}, this completes  the  proof of Proposition \ref{prop:bayesgroup}.
 \end{proof}

  By Proposition \ref{prop:bayesgroup}  we can   form a  category   $\widetilde{{\bf Bayes}}$   whose  objects consists of  all  probability spaces $(\Xx, \mu_\Xx)$ and  whose  morphisms  from $ (\Xx, \mu_\Xx)$ to $(\Yy, \mu_\Yy)$   are  probabilistic morphisms   $T \in \Probm (\Xx, \Yy)$ such  that  $T_*\mu _\Xx = \mu_\Yy$ and   $T$ admits  a Bayesian inversion relative  to  $\mu_\Xx$.   
  	Denote by  $\mathbf {Bayes}$  the quotient category  of $\widetilde{\mathbf {Bayes}}$  by the  following equivalence relation:  Two morphisms $T, T' :  (\Xx, \mu_\Xx)
  \to (\Yy,\mu_\Yy)$ are equivalent if and only if $T_*\mu_\Xx =T'_* \mu_\Xx$.  Then  $\mathbf {Bayes}$  is a groupoid.

 \section{Bayesian supervised learning}\label{sec:bsl}

In this section, using the  results obtained in the previous section, we   propose   a mathematical model and its theoretical solution to  Problem
SBI under  the  following assumption.

\underline{Assumption (BSa)}. We regard the distribution $\Pp_x\in \Pp (\Yy)$  of a label $y\in \Yy$ of $x \in \Xx$ as the distribution of $y$ conditioning at $x$ and hence  can be expressed via   supervisor operators $h \in \Pp (\Yy)^\Xx$ as $\Pp _x : = h (x)$,  and formulate our  certainty   about $h$ as follows.  The space  $\Hh\subset \Pp (\Yy)^\Xx$ of  all possible supervisor operators that govern the   probability distribution of  labels $y$ conditioning  at $x$ is parameterized  by a measurable space $\Theta$  with a measurable mapping $\pb: \Theta \to \Pp (\Yy)^\Xx, \, \pb (\Theta) = \Hh$.  Our   certainty  about $h = \pb(\theta)$ is  encoded  in a  probability measure  $\mu_\Theta \in \Pp (\Theta)$. For any $X_m  = (x_1, \ldots, x_m) \in \Xx^m$  the joint distribution  $\Pp _{X_m}$ of labels   $(y_1, \ldots, y_m)\in \Yy^m$, where $y_i $ is  the label of $x_i$, $i \in [1, m]$, is governed 
by the  {\it evaluation  operator} $\Pi_{X_m}: \Pp (\Yy)^\Xx \supset \Hh \to  \Pp (\Yy^m)$ defined by:  
\begin{equation}\label{eq:evtm}
	\Pi_{X_m} (h) : = \big (h(x_1), \ldots,  h (x_m)\big)\in \Pp (\Yy)^m \stackrel{\frak m^m}{\to} \Pp (\Yy^m),
\end{equation}
 and, taking into account of  uncertainty  about $h$,
\begin{equation}\label{eq:joindistrb}
	\Pp_{X_m}:=	(\underline{\frak m ^m \circ  \Pi_{X_m}\circ \pb})_* \mu_\Theta \in \Pp (\Yy^m).
\end{equation}

\begin{remark}\label{rem:sbi}
	(1) $\Pp (\Yy) ^\Xx$  is a measurable space  with respect to the  cylindrical  $\sigma$-algebra $\Sigma _{cyl}$, i.e., the smallest  $\sigma$-algebra  on $\Pp (\Yy)^\Xx$  such that  for any $X_m \in \Xx^m$ the evaluation map $\Pi_{X_m}: \Pp (\Yy)^\Xx \to \Pp (\Yy)^m$ is measurable.

	(2)  The supervisor operator  $h \in \Pp (\Yy)^\Xx$ encodes the  probability of the label  $y(x)$ of $x$. The assumption that    observable labels  $ y(x)\in \Yy$  of $x\in \Xx$ are  conditionally  i.i.d.  with respect  to   the parameter $\theta \in \Theta$  is expressed  as follows.   For  any $m$  the  joint  distribution of the $m$-tuple  $(y(x_{1}), \ldots, y(x_{m}))\in \Yy^m$ is $ \otimes_{i =1}^m \Pi_{x_i} \pb (\theta) = \mathfrak m^m\circ \Pi_{X_m}\circ\pb (\theta)$ with  certainty  in $\theta$ expressed by $\mu_\Theta$, i.e., for   any $A_i \in \Sigma_\Yy, i\in [1, m]$, we have
	\begin{align*} \Pp_{X_m} (A_1 \times \ldots \times A_m) & = \int_\Theta   \Pi_{ i =1}^m \Pi_{x_i} \pb (A_i|\theta)d\mu_\Theta (\theta)\\
	 =\int_\Theta   \mathfrak m^m \circ\Pi_{X_m} \circ \pb (\Pi_{i =1}^m A_i|\theta)d\mu_\Theta (\theta) &= 	(\underline{\frak m ^m \circ  \Pi_{X_m}\circ \pb})_* \mu_\Theta (A_1 \times \ldots \times A_m).
	\end{align*}
	This equality says that the condition \eqref{eq:joindistrb}  expresses the  conditional  independency of  the labels $(y(x_1), \ldots, y(x_m))$.

	(3) If $\Xx$ consists  of a single  point, this  assumption is the classical assumption  of  the Bayesian  approach  to density/probability estimation  problem.  
\end{remark}

	 For  $S_n = \big ((x_1, y_1), \ldots , (x_n, y_n)\big) \in  (\Xx \times \Yy)^n$  we denote  by $\Pi_\Xx (S_n)$  the $\Xx^n$-component  of
$S_n$, namely  $\Pi_\Xx (S_n) =  (x_1, \ldots,  x_n)\in \Xx^n$. Similarly,  $\Pi_\Yy (S_n) = (y_1, \ldots, y_n)\in \Yy^n$.
Assumption (BSa) and Remark \ref{rem:sbi} lead to the following.

\begin{definition}\label{def:fposteriord}  A {\it   Bayesian learning  model for the supervised  inference problem SBI} consists of  a      quadruple $(\Theta, \mu_\Theta, \pb , \Pp (\Yy)^\Xx)$, where  $\mu_\Theta \in \Pp (\Theta)$ and $\pb: \Theta \to \Pp (\Yy)^\Xx$  is a   measurable mapping. 
	
	(1)  For any $X_m = (x_1, \ldots, x_m)\in  \Xx^m$, the  Bayesian
	statistical model $(\Theta, \mu_\Theta, \frak m^m\circ \Pi_{X_m}\circ \pb, \Yy^m)$
	parameterizes    sampling distributions  of  $Y_m = (y_1, \ldots, y_m)\in \Yy^m$, where $y_i$ is  a label of $x_i$, with the sampling operator $\frak m^m \circ \Pi_{X_m}\circ \pb: \Theta \to \Pp (\Yy^m)$.

	\begin{align}
		\xymatrix{
			&    (\Pp(\Yy)^\Xx,\pb_* \mu_\Theta)\ar[dr]^{\Pi_{X_m}}  &   & \\
			(\Theta, \mu_\Theta) \ar[ur]^{\pb}\ar[rr]^{\Pi_{X_m}\circ \pb} & &\Pp (\Yy)^m\ar@{^{(}->}[r]^{\frak m^m}  &  \Pp (\Yy^m).
		}\label{diag:super1}
	\end{align}
	(2) For  a training sample  $S_n\in (\Xx \times \Yy)^n$,  the  {\it posterior  distribution} $ \mu_{\Theta|S_n} \in \Pp (\Theta)$ after \index{Posterior distribution} seeing $S_n$ is  the value $ \qb^{(n)}_{\Pi_\Xx (S_n)}\big(\Pi_\Yy  (S_n)\big) $  of  a  Bayesian  inversion $\qb^{(n)}_{\Pi_\Xx (S_n)}:\Yy^n \to \Pp (\Theta) $  of  the     Markov kernel $\frak m^n \circ \Pi_{ \Pi_\Xx (S_n)}\circ \pb:\Theta \to \Pp (\Yy^n)$  relative  to  $\mu_\Theta$.
	
	(3) For $T_m = (t_1, \ldots , t_m) \in \Xx^m$, the {\it  predictive distribution}  $\Pp_{T_m}\in \Pp (\Yy^m) $ of  $m$-tuple  $(y_1, \ldots, y_m)$  where $y_i$ is the label of $t_i$, given a  training data   set $S_n \in (\Xx \times \Yy)^n$,
	is defined  as the  predictive  distribution  of  the    Bayesian statistical  model $(\Theta,   \mu_{\Theta|S_n}, \frak m^m\circ \Pi_{T_m}\circ \pb, \Yy^m)$, i.e., 
	\begin{equation}\label{eq:posteriorsuper}
		\Pp_{T_m}:= (\underline{\frak m^m\circ \Pi_{T_m}\circ \pb})_*  \mu_{\Theta|S_n} \in \Pp (\Yy^m).
	\end{equation}
	(4) The aim  of  a learner  is to  estimate  and approximate   the value  of the posterior   predictive    distribution 
	\index{Posterior predictive distribution} $\Pp_{T_m}$.
\end{definition}

\begin{remark}\label{rem:posteriors} Since  $\Pi_\Yy (S_n)$ are  labels  of  $\Pi_\Xx (S_n)$, after  seeing  $S_n\in (\Xx \times \Yy)^n$,  the value $\qb^{(n)}_{\Pi_{\Xx}(S_n)} (S_n)$ is the  posterior  distribution   of $\mu_\Theta$ after seeing  $S_n$.
\end{remark}
 
The following  Proposition  states that  we  can always  choose  $\Pp (\Yy)^\Xx$ as  a universal  parameter  space  for  solving  Problem SBI under  Assumption (BSa).
\begin{proposition}\label{prop:univ}
	 Let $\big(\Theta, \mu_\Theta, \pb, \Pp (\Yy)^\Xx\big)$  be a Bayesian supervised  learning  model.  Then $\big(\Pp (\Yy)^\Xx, \pb_* \mu_\Theta, \Id_{\Pp (\Yy)^\Xx}, \Pp (\Yy)^\Xx\big)$ is a Bayesian supervised  learning  model with  the following property.   Assume  that   $\qb^{(m)}: \Yy^m \to \Pp (\Theta)$  is  a  Bayesian  inversion of   $\frak m^m \circ \Pi_{ X_m}\circ \pb:\Theta \to \Pp (\Yy^m)$  for  $X_m \in \Xx^m$. Then  
	 $(\pb)_* \circ \qb^{(m)}: \Yy^m \to \Pp \big(\Pp (\Yy)^\Xx\big)$ is a Bayesian inversion  of $\frak m^m \circ \Pi_{X_m}: \Pp (\Yy)^\Xx \to \Pp (\Yy^m)$.
\end{proposition}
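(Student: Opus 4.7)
The plan is to verify the two claims separately. The first assertion---that $(\Pp(\Yy)^\Xx, \pb_*\mu_\Theta, \Id_{\Pp(\Yy)^\Xx}, \Pp(\Yy)^\Xx)$ is a Bayesian supervised learning model in the sense of Definition \ref{def:fposteriord}---reduces to checking that $\pb_*\mu_\Theta$ is a probability measure on $\Pp(\Yy)^\Xx$, which is immediate from measurability of $\pb \colon \Theta \to \Pp(\Yy)^\Xx$ and Giry's theorem, together with the trivial fact that $\Id_{\Pp(\Yy)^\Xx}$ is measurable.

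For the main statement, I would write $\pb_1 := \pb$ and $\pb_2 := \underline{\frak m^m \circ \Pi_{X_m}}$, so that $\underline{\frak m^m \circ \Pi_{X_m} \circ \pb} = \underline{\pb_2}\circ \underline{\pb_1}$ as a probabilistic morphism $\Theta \leadsto \Yy^m$, and then apply the Radon--Nikodym characterization of Bayesian inversions (Lemma \ref{lem:bsuff}) directly. The hypothesis gives, for every $B \in \Sigma_\Theta$,
\begin{equation*}
\qb^{(m)}(B|\cdot) = \frac{d(\underline{\pb_2\circ\pb_1})_*(1_B \mu_\Theta)}{d(\underline{\pb_2\circ\pb_1})_* \mu_\Theta}.
\end{equation*}
To show $\pb_* \circ \qb^{(m)}$ is a Bayesian inversion of $\pb_2$ relative to $(\pb_1)_*\mu_\Theta$, I need the analogous identity for every $C$ in the cylindrical $\sigma$-algebra on $\Pp(\Yy)^\Xx$. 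By the definition of pushforward the left-hand side is $(\pb_*\circ \qb^{(m)})(C|y) = \qb^{(m)}(y)(\pb^{-1}C) = \qb^{(m)}(\pb^{-1}C|y)$, where $\pb^{-1}C \in \Sigma_\Theta$ by measurability of $\pb$.

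The right-hand side is handled by the key algebraic identity
\begin{equation*}
1_C \cdot (\pb_1)_*\mu_\Theta = (\pb_1)_*(1_{\pb_1^{-1}C}\,\mu_\Theta),
\end{equation*}
which holds because $\pb_1 = \pb$ is a \emph{deterministic} measurable map and so preimages distribute over intersections. Combining this with the functoriality of $(\cdot)_*$ (Proposition \ref{prop:chentsovbanach}) yields $(\underline{\pb_2})_*(1_C\cdot (\pb_1)_*\mu_\Theta) = (\underline{\pb_2\circ \pb_1})_*(1_{\pb_1^{-1}C}\mu_\Theta)$, and similarly the denominator collapses to $(\underline{\pb_2\circ \pb_1})_*\mu_\Theta$. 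The ratio therefore equals $\qb^{(m)}(\pb^{-1}C|\cdot)$ by the hypothesis, matching the left-hand side and completing the proof via Lemma \ref{lem:bsuff}.

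The only nontrivial step is the deterministic-pushforward identity just displayed; everything else is bookkeeping. An alternative route would try to invoke Proposition \ref{prop:bayesgroup}(3) with $\pb_1$ and $\pb_2$, but that requires $\pb$ itself to admit a Bayesian inversion, which is not assumed, so the direct Radon--Nikodym argument is the natural one. The crucial conceptual observation is that, although $\Pp(\Yy)^\Xx$ carries only the cylindrical $\sigma$-algebra, this is precisely what is needed for $\pb$ to be measurable into it, and it is rich enough to carry out the Radon--Nikodym identification above.
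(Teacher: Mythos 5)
Your argument is correct, but it takes a genuinely different route from the paper. The paper deduces Proposition \ref{prop:univ} from a separate general statement (Lemma \ref{lem:uni}): for a deterministic $\kappa$ with $\underline{\pb}=\underline{\pb_1}\circ\kappa$, a Bayesian inversion of $\pb$ pushes forward along $\kappa$ to one of $\pb_1$; that lemma is proved with the graph characterization \eqref{eq:bayesinv0} of Bayesian inversions together with the graph-composition identities \eqref{eq:graphcomp} and \eqref{eq:graphpusha}, i.e.\ entirely at the level of joint measures and diagrams. You instead work through the Radon--Nikodym characterization of Lemma \ref{lem:bsuff}, reducing everything to the identity $1_C\cdot\pb_*\mu_\Theta=\pb_*(1_{\pb^{-1}C}\,\mu_\Theta)$ for the deterministic map $\pb$, plus functoriality of the pushforward. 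Both arguments use determinism of $\pb$ in an essential way (the paper via \eqref{eq:graphpusha}, which is stated only for $\kappa\in\Meas$; you via the preimage identity), and both correctly handle the almost-everywhere nature of the conclusion, since your manipulations are at the level of measures and the reference measure $(\underline{\frak m^m\circ\Pi_{X_m}})_*\pb_*\mu_\Theta$ is the right one. What the paper's route buys is a reusable intermediate lemma (Lemma \ref{lem:uni} is invoked again for Corollary \ref{cor:reducedf1}) and an argument that never leaves the categorical language; what your route buys is brevity and measure-theoretic concreteness, at the cost of re-deriving in this special case what Lemma \ref{lem:uni} packages once. Your remark that Proposition \ref{prop:bayesgroup}(3) is not applicable because no Bayesian inversion of $\pb$ itself is assumed is accurate and shows you identified the right obstruction to the ``purely compositional'' shortcut. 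One trivial point: establishing that $(\Pp(\Yy)^\Xx,\pb_*\mu_\Theta,\Id,\Pp(\Yy)^\Xx)$ is a model needs only that the pushforward of a probability measure under a measurable map is a probability measure; invoking Giry's theorem there is unnecessary but harmless.
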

Proposition \ref{prop:univ}  is a direct  consequence  of the following  Lemma.
 
\begin{lemma}\label{lem:uni}  Let $(\Theta, \mu_\Theta, \pb, \Xx)$ be a  Bayesian statistical model  and $\kappa \in \Meas (\Theta, \Theta_1)$.  Assume that   the following  diagram
	$$	\xymatrix{ (\Theta, \mu_\Theta)\ar[d] ^{\kappa} \ar@{~>}[r]^{\underline \pb} &   (\Xx, \pb_* \mu_\Theta)\\
		(\Theta_1, \kappa_*\mu_\Theta)	\ar@{~>}
		[ur]^{\underline{\pb_1}}  &  
	}
	$$
	is commutative, i.e.,  $\underline \pb =  \underline \pb_1 \circ 
	\kappa$.   Let $\mu_{\Theta |\Xx}:  \Xx \to \Pp (\Theta)$ be  a Bayesian inversion  of $\pb$ relative to  $\mu_\Theta$. Then  $\kappa_* \circ \mu_{\Theta|\Xx}: \Xx \to \Pp (\Theta_1)$ is  a Bayesian inverse  of $\underline {\pb_1}$ relative to $\kappa _* (\mu_\Theta)$.
\end{lemma}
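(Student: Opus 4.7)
My plan is to verify, directly from Lemma \ref{lem:bsuff}, the Radon--Nikodym characterization of a Bayesian inversion for $\tilde\qb := \kappa_* \circ \mu_{\Theta|\Xx}$. Writing $\qb := \mu_{\Theta|\Xx}$ for short, I must show that for every $B_1 \in \Sigma_{\Theta_1}$,
\begin{equation*}
\tilde \qb(B_1\,|\,\cdot) \;=\; \frac{d(\underline{\pb_1})_*(1_{B_1}\,\kappa_*\mu_\Theta)}{d(\underline{\pb_1})_*(\kappa_*\mu_\Theta)}
\end{equation*}
as an element of $L^1\!\big(\Xx,(\underline{\pb_1})_*\kappa_*\mu_\Theta\big)$. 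Note that $\tilde\qb$ is measurable as the composition of the measurable map $\qb$ with the Lawvere push-forward $\kappa_*$ (Remark \ref{rem:functors}(2)).

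The first step is the tautological identity $\tilde\qb(x)(B_1) = \qb(x)(\kappa^{-1}(B_1))$, which rewrites the left-hand side as $\qb(\kappa^{-1}(B_1)\,|\,\cdot)$. Since $\qb$ is by hypothesis a Bayesian inversion of $\underline{\pb}$ relative to $\mu_\Theta$, Lemma \ref{lem:bsuff} yields
\begin{equation*}
\qb(\kappa^{-1}(B_1)\,|\,\cdot) \;=\; \frac{d(\underline{\pb})_*(1_{\kappa^{-1}(B_1)}\mu_\Theta)}{d(\underline{\pb})_*(\mu_\Theta)}.
\end{equation*}

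The second step is to rewrite this expression using the hypothesis $\underline{\pb} = \underline{\pb_1}\circ \kappa$. By functoriality of the push-forward (Proposition \ref{prop:chentsovbanach}), $(\underline{\pb})_* = (\underline{\pb_1})_* \circ \kappa_*$, and the elementary set-theoretic identity $\kappa_*(1_{\kappa^{-1}(B_1)}\mu_\Theta) = 1_{B_1}\,\kappa_*\mu_\Theta$ turns the numerator into $(\underline{\pb_1})_*(1_{B_1}\,\kappa_*\mu_\Theta)$ and the denominator into $(\underline{\pb_1})_*(\kappa_*\mu_\Theta)$, matching the target formula.

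I do not foresee a substantial obstacle; the argument is essentially bookkeeping once Lemma \ref{lem:bsuff} and the functoriality of the Lawvere push-forward are in hand. An equally short diagrammatic alternative is available: combining Lemma \ref{lem:decompo}(2) applied to $\tilde\qb = \kappa \circ \qb$ with Lemma \ref{lem:decompo}(3) applied to $\underline\pb = \underline{\pb_1}\circ\kappa$, one obtains $(\Gamma_{\tilde\qb})_*\mu_\Xx = (\Id_\Xx \times \kappa)_*(\Gamma_{\qb})_*\mu_\Xx$ and $(\kappa \times \Id_\Xx)_*(\Gamma_{\underline \pb})_*\mu_\Theta = (\Gamma_{\underline{\pb_1}})_*\kappa_*\mu_\Theta$, and the desired identity \eqref{eq:bayesinv0} follows after conjugating by the mirror map $\sigma_{\Xx,\Theta_1}$.
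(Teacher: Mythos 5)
Your primary argument is correct, and it takes a genuinely different route from the paper. The paper proves the statement diagrammatically: it verifies the defining identity \eqref{eq:bayesinv0} for $\kappa_*\circ\mu_{\Theta|\Xx}$ by pushing forward graphs, using \eqref{eq:graphcomp} to rewrite $(\Gamma_{\kappa\circ\underline{\mu_{\Theta|\Xx}}})_*$ as $(\Id_\Xx\times\kappa)_*\circ(\Gamma_{\underline{\mu_{\Theta|\Xx}}})_*$, invoking the Bayesian-inversion property of $\mu_{\Theta|\Xx}$, and then applying \eqref{eq:graphpusha} to land on $(\Gamma_{\underline{\pb_1}})_*\kappa_*\mu_\Theta$ --- which is exactly the ``diagrammatic alternative'' you sketch in your last sentence. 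Your main proof instead verifies the Radon--Nikodym characterization of Lemma \ref{lem:bsuff}: the hinge is the tautology $\kappa_*(\qb(x))(B_1)=\qb(x)(\kappa^{-1}(B_1))$ together with the elementary identity $\kappa_*(1_{\kappa^{-1}(B_1)}\mu_\Theta)=1_{B_1}\,\kappa_*\mu_\Theta$ and the functoriality $(\underline\pb)_*=(\underline{\pb_1})_*\circ\kappa_*$ from Proposition \ref{prop:chentsovbanach}; note also that the two reference measures $(\underline\pb)_*\mu_\Theta$ and $(\underline{\pb_1})_*\kappa_*\mu_\Theta$ coincide, so the $L^1$ identification is unambiguous. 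Both arguments are complete and short. What each buys: the paper's graph computation stays entirely inside the categorical calculus of $\mathbf{Probm}$ (graphs, mirror maps, pushforwards) and is the version that generalizes verbatim to synthetic Markov-category settings; your computation is more measure-theoretic and arguably more elementary, needing only Lemma \ref{lem:bsuff} and functoriality of $S$, and bypassing the graph identities of Lemma \ref{lem:decompo} altogether. Either proof is acceptable; you have in effect supplied both.
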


\begin{proof}[Proof of Lemma \ref{lem:uni}]
To prove that  $\kappa_*\circ \mu_{\Theta|\Xx}: \Xx \to \Pp (\Theta_1)$ is a  Bayesian inversion  of $\pb_1$ relative to  $ \kappa_*\mu_\Theta$, by  Equation \eqref{eq:bayesinv0} it suffices  to  show that 
\begin{equation}\label{eq:reduced2}
	(\sigma_{\Xx, \Theta})_* (\Gamma_{\kappa \circ \underline {\mu_{\Theta|\Xx}}})_*  (\underline {\pb_1} \circ \kappa)_* \mu _\Theta  = (\Gamma_{\underline \pb_1})_*  (\kappa_* \mu_\Theta).
\end{equation}

$$	\xymatrix@1{
	(\Theta, \mu_\Theta)\ar[d]^{\kappa} &   & \ar@/_1pc/@{~>}[ll]^{\underline {\mu_{\Theta|\Xx}}}(\Xx, (\pb_1 )_*\kappa_* \mu_\Theta)\\
	(\Theta_1, \kappa_*\mu_\Theta)	\ar@{~>}[urr]^{\underline{\pb_1}}  &   &
}
$$  
By the  formula \eqref{eq:graphcomp} for the graph of the composition of probabilistic morphism,  we have
\begin{equation}\label{eq:reduced3}
	(\Gamma_{\kappa \circ \underline {\mu_{\Theta|\Xx}}})_*  (\underline {\pb_1} \circ \kappa)_* \mu _\Theta =( \Id_\Xx \times \kappa )_* \circ (\Gamma _{\underline {\mu_{\Theta|\Xx}}})_* ( \underline {\pb_1} \circ  \kappa)_* \mu_\Theta.
\end{equation}
Since $\mu_{\Theta|\Xx}$ is  a Bayesian inversion of $\pb_1 \circ \kappa$ relative to $\mu_\Theta$, we obtain  from \eqref{eq:reduced3}, taking into account \eqref{eq:bayesinv0},
$$ (\sigma_{\Xx, \Theta})_* ( \Id_\Xx \times \kappa )_*\circ (\Gamma _{\underline {\mu_{\Theta|\Xx}}})_* ( \underline {\pb_1} \circ  \kappa)_* \mu_\Theta = (\kappa \times  \Id_\Xx)_* (\Gamma _{\underline{\pb_1} \circ \kappa})_*\mu_\Theta.$$
Using   the  formula \eqref{eq:graphpusha} for the graph of the composition of probabilistic morphism, this implies
$$(\kappa \times  \Id_\Xx)_* (\Gamma _{\underline{\pb_1} \circ \kappa})_*\mu_\Theta = (\Gamma_{\underline {\pb_1}})_*\kappa_* \mu_\Theta .$$
Combining the last two  equalities   with   \eqref{eq:reduced3}, this yields  the desired   Equation \eqref{eq:reduced2}.
\end{proof}

Next, we  need  to   find  a  formula  of a  Bayesian  inversion $\qb ^{(n)}$ of $\frak m ^n \circ \Pi_{X_n}: \Pp (\Yy)^{\Xx} \to \Pp (\Yy^n)$ relative to  a  prior  probability measure $\mu\in \Pp \big(\Pp (\Yy)^\Xx\big)$,  compute  the posterior  distribution $\qb ^{(n)} (Y_n)$  after   seeing  training data $S_n\in (\Xx \times \Yy)^n$  where $\Pi_\Xx (S_n) = X_n$ and $\Pi_{\Yy}(S_n) = Y_n$,  and  then   compute / approximate  the predictive    distribution  $(\mathfrak m^m \circ \Pi_{T_m} )_* (\qb ^{(n)} (Y_n)) \in \Pp (\Yy^m)$.  
Assume that  $X_n = (x_1, \ldots, x_n)\in \Xx^n$ and  $T_m  = (t_1,\dots,   t_m)\in \Xx^m$.  Let $A$ be the smallest subset       of $\Xx$ that contains  all $x_i , i \in [1, n]$, and all  elements $t_j , j \in [1, m] $.  Note that  the      restriction map  $R_A: \Pp (\Yy)^\Xx \to \Pp (\Yy)^A$ is measurable  with respect to the cylindrical $\sigma$-algebras.   Furthermore the following  diagram is commutative
$$
\xymatrix{
	\Pp (\Yy)^ \Xx \ar[rr]^{\frak m^n\circ \Pi_{X_n}}\ar[d] _{R _A} & & \Pp (\Yy^n)\\
	\Pp (\Yy)^{A}\ar[urr]_{\frak m^\circ \Pi^A_{X_n}} & &
}
$$

where  $\Pi_{X_n} ^A : \Pp (\Yy)^A \to \Pp (\Yy^n)$ is the evaluation  map  defined in \eqref{eq:evtm}  but for  $h \in \Pp (\Yy)^A$. As a corollary  of Lemma  \ref{lem:uni}, we obtain immediately the  following.

\begin{corollary}\label{cor:reducedf1} Let  $\qb ^{(n)} :  \Yy^n \to  \Pp (\Pp (\Yy)^\Xx)$ be a  Bayesian inversion  of $\frak m^n\circ \Pi_{X_n}: \Pp (\Yy^\Xx)\to \Pp (\Yy^n)$. Then 
	$(R_A)_* \circ  \qb ^{(n)} : \Yy^n \to \Pp (\Pp (\Yy)^A)$ is a Bayesian inversion   of $\frak m^n\circ \Pi^A_{X_n}: \Pp (\Yy)^A \to \Pp (\Yy^n)$.
\end{corollary}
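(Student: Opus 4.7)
The plan is to apply Lemma \ref{lem:uni} directly, since the corollary is essentially a matter of bookkeeping around the commutative triangle displayed just above its statement. Concretely, I would take the Bayesian statistical model of the lemma to be $(\Pp(\Yy)^\Xx,\, \mu,\, \mathfrak m^n\circ \Pi_{X_n},\, \Yy^n)$, where $\mu \in \Pp(\Pp(\Yy)^\Xx)$ is the prior with respect to which $\qb^{(n)}$ is a Bayesian inversion (this prior is not named explicitly in the corollary but is implicit in the hypothesis). The reduction map $\kappa$ of the lemma would be the restriction $R_A: \Pp(\Yy)^\Xx \to \Pp(\Yy)^A$, the reduced parameter space $\Theta_1$ would be $\Pp(\Yy)^A$, and the reduced sampling operator $\pb_1$ would be $\mathfrak m^n\circ \Pi^A_{X_n}$.

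The only hypothesis of Lemma \ref{lem:uni} that needs to be verified is the commutativity $\mathfrak m^n\circ \Pi_{X_n} = (\mathfrak m^n\circ \Pi^A_{X_n})\circ R_A$, which is precisely what the diagram displayed immediately before the corollary asserts. This commutativity holds by the construction of $A$: each $x_i$ lies in $A$, so for any $h \in \Pp(\Yy)^\Xx$ the tuple $(h(x_1),\ldots,h(x_n))$ depends only on $h|_A = R_A(h)$. The measurability of $R_A$ with respect to the cylindrical $\sigma$-algebras has already been noted in the paragraph preceding the corollary, so no additional measurability checks are required.

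Once the commutativity is in hand, Lemma \ref{lem:uni} yields at once that $(R_A)_*\circ \qb^{(n)}: \Yy^n \to \Pp(\Pp(\Yy)^A)$ is a Bayesian inversion of $\mathfrak m^n\circ \Pi^A_{X_n}$ relative to the pushforward prior $(R_A)_*\mu$, which is exactly the conclusion of the corollary. There is no substantial obstacle to overcome here: all the nontrivial content (the graph-composition identities \eqref{eq:graphcomp} and \eqref{eq:graphpusha}, and the reduction of priors along a measurable map) has been packaged inside Lemma \ref{lem:uni}, and the commutative triangle displayed just above the statement has already isolated the only bookkeeping step that remains.
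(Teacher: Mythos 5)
Your proposal is correct and is exactly the paper's own argument: the author states that the corollary follows immediately from Lemma \ref{lem:uni} applied to the commutative triangle displayed just before it, with $\kappa = R_A$, $\Theta_1 = \Pp(\Yy)^A$, and $\pb_1 = \mathfrak m^n\circ\Pi^A_{X_n}$, which is precisely your instantiation. Your additional remarks on why the triangle commutes (each $x_i$ lies in $A$) and on the implicit prior $(R_A)_*\mu$ only make explicit what the paper leaves tacit.
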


If $\Yy$ is  a Polish space,   then $\Pp (\Yy)$ is a Polish  space. Hence, by  the Kolmogorov extension theorem,    any  probability  measure  on $\Pp (\Yy)^\Xx$ is  defined  uniquely  by  its restriction  to $\Pp (\Yy)^A$  where  $A$ is a  finite  subset  of $\Xx$.   We shall illustrate   Corollary \ref{cor:reducedf1}  in  popular    Gaussian  process  regressions in Baysian learning  for   Polish spaces $\Yy = \R^n$.

Gaussian  process  regressions in Baysian learning is a particular case  of  Bayesian regression  learning. In Bayesian   regression learning,\index{Bayesian regression}  we assume  that   label spaces $\Yy$ are  Euclidean   vector  spaces $V$ endowed  with   Borel $\sigma$-algebra, and the following  stochastic  relations  (BNR1), (BNR2) between input data $x\in \Xx$  and label data  $y \in \Yy= V$ hold.\\

(BNR1)   The   stochastic  relation between  $x$  and $y$  is    expressed     by a   function  $ f\in (V ^\Xx, \Sigma_{cyl})$  with  certainty encoded  in  a probability measure  $\mu \in  \Pp (V ^\Xx)$, and the measurement   of $f(x) \in V$  is corrupted  by an  independent  noise  $\eps \in  (V, \nu_\eps), \, \nu _\eps \in \Pp (V)$. In other words,  we have
\begin{equation}\label{eq:noise}
	y  =  f(x) + \eps :  \, f \in (V ^\Xx, \mu), \,\eps \in  (V, \nu_\eps) .
\end{equation}

(BNR2) For any  $X_m = (x_1, \ldots, x_m) \in \Xx^m$, the  distribution   of  the value  $\big(f(x_1), \ldots,  f(x_m)\big)$  is defined  by $ (\Pi_{X_m}^V)_* \mu$, where $\Pi_{X_m}^V: V^\Xx \to V^m$ is 
the canonical evaluation operator
$$\Pi_{X_m}^V ( f ) : =\big(f(x_1), \ldots,  f(x_m)\big) \text{  for } f \in V^\Xx.$$ 
Hence,   the  distribution  of  the measurement  $\big (y_1 = f(x_1)+ \eps, \ldots,   y_m = f (x_m)+ \eps\big)$  is $\big ((\Pi^V_{X_m})_* \mu\big) * \nu _\eps ^m$.
\begin{remark}\label{rem:bnr}
	(1) Equation \eqref{eq:noise}  means that the  probability   of  $y$  given  $x$ is $\delta_{f(x)} *\nu_\eps$. 
	
	(2)  The condition (BNR2) is motivated  by  the functoriality of the push-forward operator  $(\Pi_{X_m}^V)_*: \Pp (V^\Xx) \to \Pp (V^m)$. This condition is also supported by the Kolmogorov extension  theorem.
\end{remark}

In Corollary \ref{cor:bnrp} below,    we shall    show that  the assumptions (BNR1)   and (BNR2)  imply the assumption  (BSa).  Let  $ \pb^\eps: V^\Xx \to \Pp (V)^\Xx$  be defined as follows:
\begin{align}
	\pb^\eps (f) &: =   \delta_{f} *\nu_\eps\in \Pp (V)^\Xx,\nonumber\\
	(\delta_{f} *\nu_\eps ) (x) & : = \delta_{f(x)} * \nu_\eps \text{ for } x \in \Xx. \label{eq:noisep}
\end{align}

Now we define $\pb^0: V^\Xx \to \Pp(V)^\Xx$   by
\begin{align}
	\pb^0 (f) &:=   \delta_{f} \in \Pp (V)^\Xx,\nonumber\\
	\delta_{f}  (x)&: = \delta_{f(x)}   \text{ for } x \in \Xx.\label{eq:noise0}
\end{align}
\begin{lemma}\label{lem:measurable1}

	(1) For any  $X_m \in \Xx^m$   we have
	\begin{equation}\label{eq:bnrp}
		\Pi^V_{X_m} = \underline {\frak m ^m \circ \Pi_{X_m}\circ \pb ^0}.
	\end{equation}	
	
			(2) The  maps $\pb ^\eps$ and  $\pb^0$   are measurable. 
	
\end{lemma}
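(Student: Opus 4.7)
The plan is to handle both parts by a short calculation: part (1) is pure definition chasing, and part (2) is a routine verification via the universal property of the cylindrical $\sigma$-algebras on $V^\Xx$ and $\Pp(V)^\Xx$.

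For part (1), I would chase an arbitrary $f \in V^\Xx$ through the composition on the right-hand side of \eqref{eq:bnrp}. By \eqref{eq:noise0}, $\pb^0(f)(x_i) = \delta_{f(x_i)}$, so $\Pi_{X_m}(\pb^0(f)) = (\delta_{f(x_1)}, \ldots, \delta_{f(x_m)}) \in \Pp(V)^m$; applying $\mathfrak m^m$ assembles this tuple into the product $\otimes_{i=1}^m \delta_{f(x_i)} = \delta_{(f(x_1),\ldots,f(x_m))}$ on $V^m$. By the shorthand \eqref{eq:delta} this is exactly $\overline{\Pi^V_{X_m}}(f)$, the generator of the probabilistic morphism associated with the measurable map $\Pi^V_{X_m}$, so both sides of \eqref{eq:bnrp} agree as probabilistic morphisms.

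For part (2), I would use the defining property of the cylindrical $\sigma$-algebra on $\Pp(V)^\Xx$: a map into $\Pp(V)^\Xx$ is measurable if and only if its composition with every evaluation $\Pi_{X_m}$ is measurable. The computation in part (1) identifies $\Pi_{X_m}\circ \pb^0$ with the Dirac embedding of $\Pi^V_{X_m}: V^\Xx \to V^m$ into $\Pp(V)^m$, which is measurable because $\Pi^V_{X_m}$ is measurable (by the cylindrical $\sigma$-algebra on $V^\Xx$ used in (BNR1)) and the Dirac map $\delta: V \to \Pp(V)$ is measurable. For $\pb^\eps$, formula \eqref{eq:noisep} gives $\pb^\eps(f)(x) = C_{\nu_\eps}(\pb^0(f)(x))$, so $\Pi_{X_m}\circ \pb^\eps$ factors as the component-wise action of $C_{\nu_\eps}$ on $\Pi_{X_m}\circ \pb^0$; the factor $C_{\nu_\eps}: \Pp(V)\to \Pp(V)$ is measurable by Proposition \ref{prop:lawvera}(2b), so the full composition is measurable as well. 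No step should pose a real obstacle; the only care required is keeping track of which cylindrical $\sigma$-algebra lives on which of $V^\Xx$ and $\Pp(V)^\Xx$ when invoking the universal property, after which the measurability of $\delta$ and of the convolution operator do all the work.
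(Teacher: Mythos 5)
Your proposal is correct and follows essentially the same route as the paper: part (1) reduces to the identity $\mathfrak m^m(\delta_{f(x_1)},\ldots,\delta_{f(x_m)})=\otimes_{i=1}^m\delta_{f(x_i)}=\delta_{(f(x_1),\ldots,f(x_m))}$ (the paper phrases this as commutativity of a square involving $\pb^0$, $\Pi^V_{X_m}$, $\delta$ and $\mathfrak m^m\circ\Pi_{X_m}$), and part (2) uses exactly the paper's argument via the cylindrical $\sigma$-algebra, the measurability of the Dirac map, and Proposition \ref{prop:lawvera}(2b) for $C_{\nu_\eps}$. Your write-up is, if anything, slightly more explicit than the paper's in spelling out why the two sides of \eqref{eq:bnrp} coincide as probabilistic morphisms.
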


\begin{proof}	

	  (1)  Let us  first  show   that  $\pb ^0 : V^\Xx \to \Pp (V)  ^\Xx$ is measurable  with respect to the  cylindrical  $\sigma$-algebras $\Sigma_{cyl}$  on   $V^\Xx$ and on  $\Pp (V) ^\Xx$.    It suffices to show that  for  any  $X_m : = (x_1, \ldots, x_m)\in \Xx^m$  the composition  $ \Pi_{X_m} \circ \pb ^0: V^\Xx \to \Pp (V)^m $ is  measurable. 
	 Taking into account the identity
	$$ \Pi_{X_m} \circ \pb ^0 (f) = ( \delta_{ f(x_1)}, \ldots, \delta_{f(x_m)}) \in \Pp (V) ^m   ,$$
	the measurability  of $\Pi^V_{X_m} \circ \pb ^0$  follows the measurability of the Dirac map $\delta:  V \to \Pp (V),   v \mapsto \delta_v$.	Since
	\begin{equation}\label{eq:bnr}
		\Pi_{X_m} \circ \pb ^\eps (f) = ( \delta_{ f(x_1)}* \nu_\eps, \ldots, \delta_{f(x_m)}* \nu_\eps) \in \Pp (V) ^m  
	\end{equation}
	the measurability  of the map $\Pi_{X_m} \circ \pb ^\eps$  follows from  the measurability    of  $\pb^0$  and  the  convolution  map
	\begin{equation}\label{eq:convmu0}
		C_{\nu_\eps}: \Pp (V) \to \Pp  (V), \mu \mapsto \mu * \nu_\eps.
	\end{equation}
	which is measurable by Proposition \ref{prop:lawvera}(2b).
	Thus we  proved the measurability of $\pb^0$ and $\pb^\eps$.

	(1)  The first
	 assertion of Lemma \ref{lem:measurable1} is equivalent  to the commutativity  of the following diagram
\begin{equation*}
	\xymatrix{
		V^\Xx \ar [rr] ^{ \Pi^V_{X_m}}\ar[d]^{\pb^0} &&  V^m \ar[d]^{\delta}\\
		\Pp (V^\Xx)\ar [rr]^{ \frak m ^m \circ \Pi_{X_m}}& &\Pp (V^m)
	}
\end{equation*}
where  $\delta$ is the Dirac map :  $ \delta (x): =\delta _x$,   whose   measurability is well-known,  see Subsection \ref{subs:notation}.

\end{proof}

Since  $\otimes _{ i =1}^m (\delta_{f(x_i)} * \nu_\eps) = (\otimes_{ i =1}^m \delta_{f(x_i)}) * \nu_\eps ^m$ we obtain   immediately from  Lemma  \ref{lem:measurable1} the following
\begin{corollary}\label{cor:bnrp}   The assumptions  (BNR1)  and (BNR2)  imply  that for any $X_m = (x_1, \ldots, x_m)\in \Xx^m$ the distribution  of
	the  value  $\big (f(x_1), \ldots,  f(x_n)\big )\in  V^m$  is governed  by the   supervisor operator  $\pb ^0  (f)\in  \Pp (\Yy)^\Xx$    with  certainty encoded  in $\mu \in \Pp (V^\Xx)$  and the distribution  of
	the  value  $\big (f(x_1)+\eps, \ldots,  f(x_n)+\eps\big )\in  V^m$  is  governed by the supervisor operator $\pb^\eps(f)\in \Pp (\Yy)$ with certainty
	in $\mu \in \Pp (V^\Xx)$.
\end{corollary}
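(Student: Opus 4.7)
The plan is to derive the corollary as a direct bookkeeping exercise from Lemma~\ref{lem:measurable1}, the explicit formula \eqref{eq:bnr} for $\Pi_{X_m}\circ\pb^\eps$, and the tensor-convolution identity $\otimes_{i=1}^m (\delta_{f(x_i)}*\nu_\eps)=\bigl(\otimes_{i=1}^m\delta_{f(x_i)}\bigr)*\nu_\eps^m$ highlighted just before the statement.

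For the noiseless claim, condition (BNR2) asserts that the distribution of $(f(x_1),\ldots,f(x_m))$ is $(\Pi^V_{X_m})_*\mu$. Lemma~\ref{lem:measurable1}(1) rewrites this identically as $(\underline{\mathfrak m^m\circ\Pi_{X_m}\circ\pb^0})_*\mu$, which is precisely the form \eqref{eq:joindistrb} required by Assumption~(BSa), with parameter space $\Theta=V^\Xx$, prior $\mu$, and supervisor operator $\pb^0$. So the first half of the corollary is immediate.

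For the noisy claim, I would evaluate $\mathfrak m^m\circ\Pi_{X_m}\circ\pb^\eps$ at a general $f\in V^\Xx$ using \eqref{eq:bnr}, obtaining $\mathfrak m^m(\delta_{f(x_1)}*\nu_\eps,\ldots,\delta_{f(x_m)}*\nu_\eps)=\otimes_{i=1}^m(\delta_{f(x_i)}*\nu_\eps)$. Applying the tensor-convolution identity converts this to
\[
\bigl(\otimes_{i=1}^m\delta_{f(x_i)}\bigr)*\nu_\eps^m \;=\; \bigl(\mathfrak m^m\circ\Pi_{X_m}\circ\pb^0(f)\bigr)*\nu_\eps^m.
\]
Pushing forward by $\mu$ and invoking the noiseless case already established, the resulting measure equals $\bigl((\Pi^V_{X_m})_*\mu\bigr)*\nu_\eps^m$, which is exactly the distribution of $(f(x_1)+\eps,\ldots,f(x_m)+\eps)$ postulated in (BNR2). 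Hence Assumption~(BSa) is fulfilled with supervisor operator $\pb^\eps$ and prior $\mu$.

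There is essentially no conceptual obstacle here; the one point requiring mild care is that push-forward by $\mu$ commutes with both the $m$-fold product operator $\mathfrak m^m$ and the convolution operator $C_{\nu_\eps^m}$, which is a formal consequence of the measurability properties collected in Proposition~\ref{prop:lawvera} and the fact, recalled in Lemma~\ref{lem:measurable1}(2), that $\pb^0$ and $\pb^\eps$ are themselves measurable.
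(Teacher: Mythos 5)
Your proposal is correct and follows essentially the same route as the paper, which derives the corollary ``immediately'' from Lemma~\ref{lem:measurable1} together with the identity $\otimes_{i=1}^m(\delta_{f(x_i)}*\nu_\eps)=\bigl(\otimes_{i=1}^m\delta_{f(x_i)}\bigr)*\nu_\eps^m$; you have simply written out the bookkeeping that the paper leaves implicit. No gaps.
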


Lemma \ref{lem:measurable1} implies that    we can apply  the  Bayesian supervised  learning  model in  Definition \ref{def:fposteriord}  for learning the  probability of distribution  of     any $k$-tuple $(y_1, \ldots, y_k)$  where  $y_i = f(x_i)$ or  $y_i = f(x_i)+\eps$.

Taking into account Definition  \ref{def:fposteriord} of a Bayesian learning model for supervised inference problem SBI, Lemma \ref{lem:measurable1},  and Equations \eqref{eq:bnr}, \eqref{eq:convmu0}, we summarize  our discussion in the following.

\begin{definition}[{\bf Bayesian regression model}]\label{def:bnr} A  Bayesian  model  for learning      a function  $f: \Xx \to V$  from  corrupted  data $\{y_i = f(x_i) + \eps, i \in [1, n]\}$,  defined as in \eqref{eq:noise}, consists  of a   quadruple  $(\Theta, \mu_\Theta, h, V^\Xx)$ where  $(\Theta ,\mu_\Theta)$ is a    probability space,  and $h:\Theta \to V^\Xx$ is a measurable map.  
	
	\index{Bayesian regression}
	
	(1) For any $X_m = (x_1, \ldots, x_m)\in \Xx^m$ the Bayesian statistical model  $(\Theta, \mu_\Theta, C_{\nu_\eps ^m}\circ \delta \circ \Pi_{X_m}^V \circ h, V^m)$  is a  Bayesian  model  for  learning the probability distribution  of $(y_1=f(x_1)+\eps, \ldots, y_m=f(x_m)+ \eps)\in V^m$. 
	$$
	\xymatrix@1{
		\Theta \ar[r]^h & V^\Xx \ar@{~>}[rrr]^{\underline{C_{\nu_\eps ^m}\circ \delta \circ \Pi_{X_m}^V}} & & & V^m }.
	$$
	
	(2) For  training  data $S_n \in (\Xx \times V)^n$, the posterior  distribution $\mu_{\Theta|S_n} \in \Pp (\Theta)$  after  seeing   $S_n$  is the value $ \qb ^{(n, \eps)}_{\Pi_\Xx (S_n)} (\Pi_\Yy (S_n))$  of a Bayesian  inversion  $\qb ^{(n, \eps)}_{\Pi_\Xx (S_n)}: V^n \to \Pp (\Theta)$ of the Markov  kernel  $C_{\nu_\eps ^n}\circ \delta \circ \Pi_{\Pi_X (S_n)}^V \circ h : \Theta \to \Pp(V^n)$ relative to $\mu_\Theta$.
	
	$$
	\xymatrix@1{
		\Theta \ar[r]_h & V^\Xx \ar@/_1pc/@{~>}[rrr]_{\underline{C_{\nu_\eps ^n}\circ \delta \circ \Pi_{\Pi_X (S_n)}^V}} & & & V^n \ar@/_1pc/@{~>}[llll]_{\qb ^{(n, \eps)}_{\Pi_\Xx (S_n)} (\Pi_\Yy (S_n))} 
	}
	$$
	
	(3) For  new test data $T_m = (t_1, \ldots, t_m)\in \Xx^m$, the  posterior  predictive  distribution $\Pp_{T_m} \in \Pp (V^m)$ of   the tuple $\big(y_1 = f(t_1), \ldots,  y_m = f(t_m)\big)$     is  defined  as the  predictive  distribution  of the Bayesian  statistical model $(\Theta, \mu_{\Theta|S_n}, \frak m ^m \circ \Pi_{T_m} \circ \pb^0\circ h,\Pp(V^m)) $, i.e., \index{posterior predictive distribution}
	$$
	\xymatrix@1{
		(\Theta, \mu_{\Theta|S_n}) \ar[r]^h & V^\Xx \ar[rrr]^{\Pi_{T_m}^V} & & & V^m }
	$$
	\begin{equation}\label{eq:predictivenoise}
		\Pp_{T_m} =  (\Pi_{T_m}^V)_*  h_* \mu_{\Theta|S_n}\in \Pp (V^m).
	\end{equation}
	
\end{definition}

\begin{example}\label{ex:gauss} Letting  $\mu\in \Pp (\R^\Xx)$ in Definition \ref{def:bnr}  be    a Gaussian process and   $\nu_\eps :  =\Nn(0, \eps^2)$ - the standard centered  Gaussian measure on $\R$,  we   obtain   a Gaussian process  regression  in    statistical learning \cite{RW2006}, \cite{GV2017}.
 A  Gaussian  process $\Gg\Pp (m, K) \in \Pp (\R^\Xx)$ is defined  uniquely by its  mean  function $m : \Xx \to \R,  x \mapsto m \big(({\Pi _x})_* \Gg\Pp (m, K)\big)$, where 
 $ m \big(({\Pi _x})_* \Gg\Pp (m, K)\big)$ is the mean  of the  Gaussian measure $({\Pi _x})_* \Gg\Pp (m, K)$ on  $\R$  and by $K: \Xx \times \Xx\to \R$, the  covariance  function  of $ \Gg\Pp (m, K)$ \cite{Bogachev1998}. If   $K (x, x)> 0 $ for all  $x$  then  for  any $X_m \in \Xx^m$ its pushforward  measure  $(\Pi_{X_m} )_* \Gg\Pp (m, K)$ is a nondegenerate Gaussian measure on $\R^m$.  In this case  we have a   formula  for  posterior  distribution $\Gg\Pp (m, K)_{|S_n}$ of $\Gg\Pp (m, K)$ after seeing    training data  $S_n \in  (\Xx \times V)^n$,  and a formula for  the corresponding   posterior  predictive  distribution of   the  tuple  of labels  of $T_m =  (t_1, \ldots, t_m) \in \Xx^m$,  see, e.g.,  \cite{RW2006}, \cite{Bishop2006}. This formula    is obtained  by  a similar procedure      explained in  Corollary \ref{cor:reducedf1}, which  can be described  in the following   diagram  and  Lemma  \ref{lem:Bayesinreg} below.
 \begin{equation}\label{eq:Gaussproj}
 	\xymatrix{
 		& \big(\R^\Xx, \mu\big)\ar@{~>}[d]_{(\Pi_{T_m}^V \times \underline{\pb^{(n,\eps)}_{X_n}})} &\\
 		& \ar[dl]_{\Pi_{\R^m}}(\R^m \times \R^n, \mu_{m, n^\eps})\ar[dr]_{\Pi_{\R^n} }  & \\
 		\big(\R^m, (\Pi_{T_m}^V)_* \mu\big)   &      & \ar@{~>}[ll]_{(\Pi_{T_m}^V)\circ \underline{\qb^{(n,\eps)}_{X_n}}}\big(\R^n,(\underline{\pb_{X_n}^{(n, \eps)}})_*\mu\big).  \ar@{~>}[uul]_{  \underline{\qb^{(n, \eps)}_{X_n}} }
 	}
 \end{equation}
 Here  $X_n  = (x_1, \ldots , x_n) = \Pi_\Xx (S_n)$,
 $T_m =  (t_1, \ldots, t_m) \in \Xx^m$, $\mu = \Gg\Pp(m, K)$, $\pb_{X_n}^{(n, \eps)}: = \frak m ^n \circ \Pi_{X_n} \circ \pb ^\eps$,  $\mu_{m, n ^\eps} = \big(\Pi_{T_m}^V \times \underline{\pb_{X_n}^{(n, \eps)}}\big)_*\Gg\Pp(m, K)\in \Pp (\R^m \times \R^n)$  is also a  Gaussian measure on  $\R^{m +n}$,   and  $\qb^{(n, \eps)}_{X_n} :  V^k \to\Pp ( V^\Xx)$  is the Bayesian inversion  of $\pb^{(n, \eps)}_{X_n}:  \R^\Xx  \to \Pp (\R^k)$  obtained   by the classical   Bayes formula. 
 
 \begin{lemma}\label{lem:Bayesinreg} The measurable map  $ (\Pi^{V}_{T_m})_*   \qb ^{(n, \eps)}_{X_n}: \R^n \to \Pp  (\R^m)$  is a  product  regular   conditional probability measure  for $\mu_{m, n^\eps} :=\big(\Pi^V_{T_m}\times \underline{\pb ^{(n, \eps)}_{X_n}}\big)_* \mu$ with respect to the  projection $\Pi_{\R^n}: \R^m \times \R^n \to \R^n
 	$. Consequently, for    $(\underline{\pb ^{(n,\eps)}_{X_n}})_*\mu$-a.e.  $Y_n  \in \R^n$ the  posterior  predictive  distribution  $(\Pi_{X_m})_* \qb ^{(n, \eps)}_{X_n}  (Y_n)$ is  equal to $\qb (Y_n)$ where  $\qb: \R^n \to \Pp(\R^m)$ is  a regular  conditional  probability measure for $\mu_{m,n ^\eps}$   with respect to the projection $\Pi_{\R^m}: \R^m\times \R^n \to \R^m$. 
 \end{lemma}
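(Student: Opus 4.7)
\emph{Plan.} The strategy is to verify directly, via the characterization of regular conditional probability measures in Theorem \ref{thm:marginal}(1), that $T_0 := (\Pi^V_{T_m})_* \circ \qb^{(n,\eps)}_{X_n}$ is the conditional; the ``consequently'' statement will then follow from the a.e.-uniqueness in Theorem \ref{thm:marginal}(2). First, by the composition formula \eqref{eq:tcompose} I observe that $T_0$ generates the probabilistic morphism $\Pi^V_{T_m} \circ \underline{\qb^{(n,\eps)}_{X_n}}: \R^n \leadsto \R^m$. Writing $\mu_n := (\underline{\pb^{(n,\eps)}_{X_n}})_*\mu$, which is both the Bayesian marginal on $\R^n$ and the second marginal of $\mu_{m,n^\eps}$, the mirror-dual of Theorem \ref{thm:marginal}(1) reduces the first assertion of the lemma to the identity
\begin{equation*}
(\Gamma_{T_0})_*\mu_n = (\sigma_{\R^m,\R^n})_*\mu_{m,n^\eps}.
\end{equation*}

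Second, to verify this identity I chain three ingredients. Applying the graph-composition formula \eqref{eq:graphcomp} with $p_1 = \underline{\qb^{(n,\eps)}_{X_n}}$ and $p_2 = \Pi^V_{T_m}$ gives $(\Gamma_{T_0})_*\mu_n = (\Id_{\R^n} \times \Pi^V_{T_m})_* (\Gamma_{\underline{\qb^{(n,\eps)}_{X_n}}})_*\mu_n$. Next, the defining Equation \eqref{eq:bayesinv0} of the Bayesian inversion $\qb^{(n,\eps)}_{X_n}$ of $\pb^{(n,\eps)}_{X_n}$ relative to $\mu$, after applying $(\sigma_{\R^\Xx,\R^n})_*$, yields $(\Gamma_{\underline{\qb^{(n,\eps)}_{X_n}}})_*\mu_n = (\sigma_{\R^\Xx,\R^n})_*(\Gamma_{\underline{\pb^{(n,\eps)}_{X_n}}})_*\mu$. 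Finally, the elementary identity of measurable maps
\begin{equation*}
(\Id_{\R^n} \times \Pi^V_{T_m}) \circ \sigma_{\R^\Xx, \R^n} \;=\; \sigma_{\R^m, \R^n} \circ (\Pi^V_{T_m} \times \Id_{\R^n})
\end{equation*}
combined with the fact (read off from Definition \ref{def:graph}(1)) that $(\Pi^V_{T_m}\times \Id_{\R^n})_*(\Gamma_{\underline{\pb^{(n,\eps)}_{X_n}}})_*\mu$ coincides with the join $(\Pi^V_{T_m}\cdot \underline{\pb^{(n,\eps)}_{X_n}})_*\mu = \mu_{m,n^\eps}$, assembles the three equalities into the claimed identity.

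Third, for the ``consequently'' part I invoke Theorem \ref{thm:marginal}(2): having shown that both $T_0$ and $\qb$ are regular conditional probability measures for $\mu_{m,n^\eps}$ with respect to the same projection, uniqueness almost everywhere forces $T_0(Y_n) = \qb(Y_n)$ for $\mu_n$-a.e.\ $Y_n \in \R^n$. The only real obstacle is bookkeeping: one must keep the two mirror maps $\sigma_{\R^\Xx,\R^n}$ and $\sigma_{\R^m,\R^n}$ distinct, insert them in the correct positions dictated by Definition \ref{def:bayesinv}, and recognize that the join appearing in the definition of $\mu_{m,n^\eps}$ is nothing but a deterministic push-forward of the graph of $\underline{\pb^{(n,\eps)}_{X_n}}$. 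No analytic input from the Gaussian structure is required; the result is a purely categorical consequence of the framework built in Section \ref{sec:unified}.
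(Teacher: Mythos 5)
Your proposal is correct and follows essentially the same route as the paper's proof: reduce the first assertion via Theorem \ref{thm:marginal}(1) to the identity $(\sigma)_*(\Gamma_{\Pi^V_{T_m}\circ \underline{\qb^{(n,\eps)}_{X_n}}})_*(\underline{\pb^{(n,\eps)}_{X_n}})_*\mu=\mu_{m,n^\eps}$, establish it by chaining the graph-composition formula \eqref{eq:graphcomp} with the defining property \eqref{eq:bayesinv0} of the Bayesian inversion, and then identify $(\Pi^V_{T_m}\times\Id_{\R^n})_*(\Gamma_{\underline{\pb^{(n,\eps)}_{X_n}}})_*\mu$ with $\mu_{m,n^\eps}$ (you do this by recognizing the join directly, the paper by computing the two projection compositions via \eqref{eq:tdecomp} and \eqref{eq:inverse} --- a cosmetic difference). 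The ``consequently'' step via the a.e.-uniqueness of Theorem \ref{thm:marginal}(2) likewise matches the paper's argument.
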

 \begin{proof} 
 	
 	1) To prove the first assertion of  Lemma \ref{lem:Bayesinreg}, by Theorem \ref{thm:marginal}, it suffices  to prove the  following equality :
 	\begin{equation}\label{eq:reducebi}
 		(\sigma_{\R^m, \R^n})_*	(\Gamma_{\Pi^V_{T_m}\circ \underline{\qb ^{(n,\eps)} _{X_n}}})_* (\underline{\pb ^{(n, \eps)}_{X_n}})_*\mu  =  \big(\Pi^V_{T_m}\times \underline{\pb ^{(n, \eps)}_{X_n}}\big)_* \mu.
 	\end{equation}
 	The following diagram visualizes  the considered  probabilistic morphisms  and  probability  spaces. 
 	\begin{equation}\label{eq:reducebidiag}
 		\xymatrix{
 			(\R^\Xx, \mu)\ar[d]_{\Pi_{T_m} ^V} & & & \ar[lll] _{\Pi_{\R^\Xx}} (\R^\Xx \times \R^n, (\Gamma_{\underline{\pb ^{(n, \eps)}_{X_n}}})_* \mu)\ar[dl]_{\Pi_{\R^n}}\ar[ddl] ^{\Pi^V_{T_m} \times \Id_{\R^n}}\\
 			\R^m  & & \ar@{~>}[ull]_{\underline{ \qb ^{(n, \eps)}_{X_n}}}  \ar@{~>}[ll]_{\Pi^V_{T_m}\circ \underline{\qb ^{(n, \eps)}_{X_n}}}\R^n  & \\
 			& & \ar[ull]^{\Pi_{\R^m}} \Big (\R^m \times \R^n,\big(\Pi^V_{T_m}\times \underline{\pb ^{(n, \eps)}_{X_n}}\big)_* \mu\Big)  .\ar[u]^{\Pi_{\R^n}} 
 		}
 	\end{equation}

 	By \eqref{eq:graphcomp} we have
 	\begin{equation}\label{eq:reducebi1}
 		(\Gamma _{\Pi^V_{T_m}\circ \underline{\qb ^{(n,\eps)} _{X_n}}})_* (\underline{\pb ^{(n, \eps)}_{X_n}})_*\mu =  (\Id_{\R^n} \times \Pi^V_{T_m})_*  (\Gamma_{\underline{\qb ^{(n,\eps)} _{X_n}}})_*(\underline{\pb ^{(n, \eps)}_{X_n}})_*\mu.
 	\end{equation}
 	Since $\qb^{(n, \eps)}_{X_n}$ is a Bayesian  inversion of $\pb ^{(n,\eps)}_{X_n}$ relative  to $\mu$,  taking into account  Theorem \ref{thm:marginal}, we  obtain from \eqref{eq:reducebi1}
 	\begin{equation}\label{eq:reducebi2}
 		(\sigma_{\R^m, \R^n})_*		(\Gamma _{\Pi^V_{T_m}\circ \underline{\qb ^{(n,\eps)} _{X_n}}})_* (\underline{\pb ^{(n, \eps)}_{X_n}})_*\mu =  ( \Pi^V_{T_m }\times \Id_{\R^n} )_* (\Gamma_{\underline {\pb ^{(n,\eps)}_{X_n}}})_*\mu.
 	\end{equation}
 	By \eqref{eq:tdecomp}, we  have  
 	\begin{equation}\label{eq:reducebi3}\Id_{\R^n} \circ \Gamma_{\underline{\pb ^{(n,\eps)}_{X_n}}} =\underline{\pb ^{(n, \eps)} _{X_n}}.
 	\end{equation}
 	On the other hand, we have
 	\begin{equation}
 		\Pi^V_{T_m}\circ \Gamma_{\underline{\pb ^{(n,\eps)}_{X_n}}} =  \Pi^V_{T_m}\circ \Pi_{\R^\Xx}  \circ \Gamma _{\underline{\pb ^{(n, \eps)}_{X_n}}}\stackrel{\eqref{eq:inverse} }{ = }\Pi^V_{T_m}.\label{eq:reducebi4}
 	\end{equation}
 	From \eqref{eq:reducebi2}, \eqref{eq:reducebi3}, \eqref{eq:reducebi4}  we obtain \eqref{eq:reducebi}, and  the first assertion of Lemma \ref{lem:Bayesinreg}, immediately.
 	
 	2) The  second assertion of Lemma \ref{lem:Bayesinreg}  follows  from the first one taking into account the  almost surely uniqueness  of  Bayesian inversions  in the  case  dominated  Markov kernels.
 	
 \end{proof}
\end{example}
 \section{Final remarks}\label{sec:final}
 
 In this    paper, using  category theory of Markov kernels, we  proposed a  unified  model  of Bayesian  supervised  learning.       
 
   We  formulate  two open problems which are interesting  and  important  to the author.
   
 1)   For  interesting   spaces $\Xx$ and measurable space  $\Yy$   find       prior probability measures  on $\Pp (\Yy)^\Xx$  that  admit   feasible  computations  of    Bayesian inversions  of $\mathfrak m ^m4 \circ \Pi_{X_m}: \Pp (\Yy) ^\Xx \to \Pp (\Yy ^m)$  for  any $X_m \in \Xx^m$.
 
2)  Find a satisfying   notion of consistency of    a prior  distribution on   $\Pp (\Yy)^\Xx$, extending    a recent  result by Koerpernik-Pfaff \cite{KP2021}  concerning  consistency of Gaussian  process regression.
 
 Finally we note that Corollary \ref{cor:bnrp} remains valid  if we   relax  the condition in \eqref{eq:noise} such that in the condition (BNR2) the distribution of the measurement $(y_1 = f(x_1)+ \eps (x_1), \ldots, y_m = f(x_m)+ \eps(x_m) ) $ is  $(\Pi^V_{X_m})_* \mu)_* \otimes _{i =1}^m  \nu_\eps (x_i)$,  where $\eps(x_i) \in  (V, \nu (x_i))$ and $\nu (x_i) \in \Pp (V)$ for $i \in [1,m]$.
\section*{Acknowledgement}
The author wishes to   thank Arthur Parzygnat,  Tobias Fritz, Chafik Samir, Tien-Tam Tran for helpful discussions, and Domenico Fiorenza for  helpful  remarks and suggestions on an earlier  version of this paper.  A  part  of this  work  has been  conceived  while  she was  Visiting Professor  of Kyoto  University  from July till October 2022. She  is grateful to Kaoru Ono   and  the Research Institute of Mathematical Sciences  for their hospitality  and  excellent working  conditions in Kyoto.  The author also expresses her  gratitude to the  referee for his/her 
helpful comments and suggestions.

The research of HVL was supported by the Institute of Mathematics,  Czech Academy of Sciences (the institute grant Nr. RVO: 67985840) and GA\v CR  (the grant Nr.  GA22-00091S).

\end{document}